\numberwithin{equation}{section}
\newcommand\dist{{\operatorname{dist}}}
\newcommand\eps{\varepsilon}
\DeclareMathOperator{\im}{im}
\DeclareMathOperator{\LV}{V}
\DeclareMathOperator{\dom}{dom}
\DeclareMathOperator{\Id}{Id}
\DeclareMathOperator{\mesh}{mesh}
\newcommand{\quash}[1]{}
\def\XXint#1#2#3{{\setbox0=\hbox{$#1{#2#3}{\int}$}
     \vcenter{\hbox{$#2#3$}}\kern-.5\wd0}}
\title[C\`{a}dl\`{a}g modifications of Markov Processes]{C\`{a}dl\`{a}g modifications of Markov Processes}
\author{Roni Edwin}
\newtheorem{theorem}{Theorem}[section]
\newtheorem{definition}[theorem]{Definition}
\newtheorem{lemma}[theorem]{Lemma}
\newtheorem{proposition}[theorem]{Proposition}
\begin{document}

\begin{abstract}
     This is a report of the work done in the David Harold Blackwell Summer Research Institute (DHBSRI). Here we give a proof of the existence of c\`{a}dl\`{a}g modification of Markov Processes (on an appropriate space) with Feller semigroup.
\end{abstract}

\maketitle

\tableofcontents

\renewcommand{\theenumi}{\alph{enumi}}
\section{Introduction}
The main focus of our project at DHBSRI was proving the existence c\`{a}dl\`{a}g modifications of Markov processes on a finite state space. Let us unpack these definitions. There is the familiar definition of a Markov process as a sequence of Random Variables $\left(X_n\right)_{n \in \mathbb{N}}$ taking values in a finite state space $S=\left\{s_1, . . ., s_m\right\}$ such that the transition probabilities \begin{equation*}
    \left(\mathbb{P}\left[X_{k+1}=s_j \ | \ X_k=s_i\right]\right)_{1\le i,j\le n}
\end{equation*} are independent of the `time' parameter $k$. 

In the context of Probability theory, one typically works with a more general definition. We first give a brief review of some relevant concepts from Probability theory. We assume some basic familiarity with concepts from measure theory (the definition of a $\sigma$-algebra, a measure/measurable space, etc). 
\subsection{Review}
Let $\Omega$ be a set. Given a collection of subsets $\mathcal{A}$ of $\Omega$, we denote by $\sigma(\mathcal{A})$ the smallest $\sigma$-algebra containing $\mathcal{A}$. So \begin{align*}
    \sigma(\mathcal{A})\coloneqq \bigcap_{\substack{\Sigma\supset \mathcal{A} \\ \Sigma\text{ is a $\sigma$-algebra}}}\Sigma.
\end{align*} $\sigma(\mathcal{A})$ is also referred to as the $\sigma$-algebra generated by $\mathcal{A}$. To wit, for a topological space $S$, we let $\mathcal{B}(S)$ be the Borel $\sigma$-algebra on $S$ (the $\sigma$-algebra generated by the open sets in $S$). Let $(\Omega,\Sigma)$, $\left(\Omega',\Sigma'\right)$ be measure spaces, and let $\left(X_i\right)_{i \in I}$ is a collection of functions from $\Omega$ to $\Omega'$. The $\sigma$-algebra generated by the functions $\left(X_i\right)_{i \in I}$, written  $\sigma\mathopen{}\left(\left\{X_i:i \in I\right\}\right)$, is the $\sigma$-algebra generated by the preimages of the functions $X_i$ on the elements of $\Sigma'$, so \begin{align*}
    \sigma\mathopen{}\left(\left\{X_i:i \in I\right\}\right)\coloneqq \sigma\mathopen{}\left(\left\{X_i^{-1}(U): U \in \Sigma',i \in I\right\}\right).
\end{align*} Equivalently, it is the smallest $\sigma$-algebra with respect to which all the functions $\left(X_i : i\in I\right)$ are measurable. With this, we can talk about product $\sigma$-algebras.
\begin{definition}[Product $\sigma$-algebras]
    Let $\mathcal{A}$ be an arbitrary index set, and for each $\alpha \in \mathcal{A}$, let $\left(\Omega_\alpha,\Sigma_\alpha\right)$ be a measurable space. The Cartesian product space $\Omega\coloneqq \prod_{\alpha \in \mathcal{A}}\Omega_\alpha$ is the space of all functions $\omega\colon \mathcal{A} \to \bigcup_{\alpha \in \mathcal{A}}\Omega_\alpha$ such that for each $\alpha \in \mathcal{A}$, $\omega(\alpha) \in \Omega_\alpha$. Coordinate projection
maps $\left\{\pi_\alpha\colon \Omega \to \Omega_\alpha:\alpha \in \mathcal{A}\right\}$ on $\Omega$ are defined $\pi_\alpha(\omega)=\omega(\alpha)$. With this, the product $\sigma$-algebra $\otimes_{\alpha \in \mathcal{A}}\Sigma_\alpha$ is the $\sigma$-algebra generated by the coordinate projections $\left\{\pi_\alpha:\alpha \in \mathcal{A}\right\}$.
\end{definition}
Note that the product $\sigma$-algebra  $\otimes_{\alpha \in \mathcal{A}}\Sigma_\alpha$ is in general \emph{not} the $\sigma$-algebra generated by the collection of Cartesian products of sets from the respect $\sigma$-algebras $\Sigma_\alpha$. We now define a filtration.

\begin{definition}[Filtration]
    A Filtration on a set $\Omega$ is a collection of $\sigma$-algebras  $\left(\mathcal{F}_t\right)_{t\ge 0}$ on $\Omega$ such that for all $s,t\ge 0$, $s\le t$ implies $\mathcal{F}_s \subset \mathcal{F}_t$.
\end{definition}
Often times, included in the filtration $\left(\mathcal{F}_t\right)_{t\ge 0}$ is a larger $\sigma$-algebra $\mathcal{F}_\infty$, satisfying $\mathcal{F}_t \subset \mathcal{F}_\infty$ for all $t\ge 0$.
We say a stochastic process $\left(X_t\right)_{t\ge 0}$ is adapted to a filtration $\left(\mathcal{F}_t\right)_{t\ge 0}$ if for each $t\ge 0$, the random variable $X_t$ is $\mathcal{F}_t$-measurable. 
\begin{definition}[Conditional expectation]
    Let $\left(\Omega,\Sigma,\mathbb{P}\right)$ be a probability space, and let $X\colon \Omega \to \mathbb{R}$ be an integrable random variable. Let $\mathcal{G}\subset \Sigma$ be a $\sigma$-algebra. The conditional expectation of $X$ given $\mathcal{G}$ is the unique (up to sets of measure $0$) $\mathcal{G}$-measurable random variable $Z\colon \Omega \to \mathbb{R}$ such that \begin{align*}
        \mathbb{E}[Xg]=\mathbb{E}[Zg]
    \end{align*} for each $\mathcal{G}$-measurable function $g\colon \Omega \to \mathbb{R}$. We write $Z=\mathbb{E}[X|\mathcal{G}]$.
\end{definition} We take it for granted that $\mathbb{E}[Z|\mathcal{G}]$ exists and is unique (up to sets of measure $0$). If $Y\colon \Omega \to \mathbb{R}$ is a random variable, we write $\mathbb{E}[X|Y]$ to mean the conditional expectation of $X$, given $\sigma(Y)$, so $\mathbb{E}[X|Y]\coloneqq \mathbb{E}[X|\sigma(Y)]$. An important property of conditional expectation is that \begin{align*}
    \mathbb{E}\left[\mathbb{E}[X|\mathcal{G}]\right]=\mathbb{E}[X],
\end{align*} for any integrable random variable $X\colon \Omega \to \mathbb{R}$ and sub $\sigma$-algebra $\mathcal{G}$.

With these concepts, we can now give a general definition of a Markov Process, that coincides with that given in Section 2.3 of \cite{seppalainen2012basics}.
\begin{definition}[Markov Process]
Let $(E,\mathcal{E})$ be a topological space equipped with its Borel $\sigma$-algebra. A stochastic process $\left(X_t\right)_{t\ge 0}$ on a probability space $\left(\Omega, \Sigma,\mathbb{P}\right)$ with values in $E$, adapted to a filtration $\left(\mathcal{F}_t\right)_{t\ge 0}$, is a Markov Process if for all bounded measurable functions $f:E \to \mathbb{R}$ and $s,t\ge 0$ one has \begin{equation*}
    \mathbb{E}\left[f\mathopen{}\left(X_{s+t}\right)\mathclose{}  |  X_s\right]=\mathbb{E}\left[f\mathopen{}\left(X_{s+t}\right)\mathclose{}  |  \mathcal{F}_s\right].
\end{equation*}  We refer to the equality above as the \emph{Markov Property}.
\label{firstdefMarkovprocess}
\end{definition}
Loosely speaking, this means the transition probabilities at a particular state only depend on the information at that time. We now introduce the idea of a \emph{transition semigroup}, following the definition given in \cite{LeGall}.
Let $E$ be a metrizable locally compact topological space. We also assume that $E$ is $\sigma$-compact, meaning that $E$ is a countable union
of compact sets. The space $E$ is equipped with its Borel $\sigma$-algebra $\mathcal{E}$. In this case, one can find an increasing sequence $\left\{K_n\right\}_{n=1}^\infty$ of compact subsets of $E$,
such that any compact set of $E$ is contained in $K_n$ for some $n$. A function $f\colon E \to \mathbb{R}$
tends to $0$ at infinity if, for every $\eps>0$, there exists a compact subset $K$ of $E$ such
that $|f(x)|\le \eps$ for all $x \in E\setminus K$. This is equivalent to requiring that
\begin{equation*}
    \sup_{x \in E\setminus K_n}|f(x)| \to 0
\end{equation*} as $n \to \infty$.
We let $C_0(E)$ stand for the set of all continuous real functions on E that tend to 0
at infinity, and $C(E)$ the space of all bounded continuous functions on $E$. The spaces $C_0(E)$ and $C(E)$ are a Banach spaces with the supremum norm.

Subsequently, unless stated otherwise, $(E,\mathcal{E})$ denotes a metrizable locally compact topological space that is $\sigma$-compact, equipped with its Borel $\sigma$-algebra $\mathcal{E}$.

A \emph{transition kernel} from $E$ into $E$ is a mapping $Q\colon E\times \mathcal{E} \to [0,1]$ satisfying the following two properties:
\begin{enumerate}
    \item For every $x \in E$, the mapping $\mathcal{E}\ni A \mapsto  Q(x,A)$ is a probability measure on $(E,\mathcal{E})$.
    \item For every $A \in \mathcal{E}$, the mapping $E\ni x \mapsto Q(x,A)$ is $\mathcal{E}$-measurable.
\end{enumerate} 
If $f\colon E \to \mathbb{R}$ is bounded and measurable, or non-negative and measurable, we denote by $Qf$ the function defined by \begin{equation}
    Qf(x)=\int_EQ(x,\textup{d}y)f(y).
    \label{Qfdef}
\end{equation} This allows us to define a transition semigroup on $E$. \begin{definition}[Definition $6.1$ in \cite{LeGall}]
    A collection $\left(Q_t\right)_{t\ge 0}$ of transition kernels on $E$ is called a transition semigroup if the following $3$ properties hold:
    \begin{enumerate}
        \item For every $x \in E$, $Q_0(x,\textup{d}y)=\delta_x(\textup{d}y)$.
        \item For every $s,t\ge 0$ and $A \in \mathcal{E}$, \begin{equation*}
            Q_{s+t}(x,A)=\int_EQ_t(x,\textup{d}y)Q_s(y,A).
        \end{equation*} Equivalently, interpreted as maps from $L^\infty(E)$ to $L^\infty(E)$ via the definition in \eqref{Qfdef}, we have \begin{align*}
            Q_{s+t}=Q_sQ_t.
        \end{align*}
        \item For every $A \in \mathcal{E}$, the function $(t,x) \mapsto Q_t(x,A)$ is measurable with respect to the product $\sigma$-algebra $\mathcal{B}([0,\infty))\otimes \mathcal{E}$.
    \end{enumerate}
\end{definition}
With this, we can give a more specific definition of a Markov process .
\begin{definition}[Time-homogeneous Markov process with respect to semigroup]
Let $\left(Q_t\right)_{t\ge 0}$ be a transition semigroup on $E$.
A Markov process $\left(X_t\right)_{t\ge 0}$ adapted to a filtration $\left(\mathcal{F}_t\right)_{t\ge 0}$, taking values in $E$, per Definition \ref{firstdefMarkovprocess}, with transition semigroup $\left(Q_t\right)_{t\ge 0}$ is one such that, for every $s,t\ge 0$ and bounded measurable function $f\colon E \to \mathbb{R}$, we have \begin{align*}
    \mathbb{E}\left[f(X_{s+t})  | \mathcal{F}_s\right]=Q_tf(X_s).
\end{align*}
\end{definition}
Here the phrase `time-homogeneous' refers to the fact that the transition probabilities from $X_s$ to $X_{s+t}$ depends only on $t$.

Let $\gamma$ be the distribution of $X_0$. Observe that as a consequence of this definition that for any $\varphi \in C(E^k)$ and reals $0\le t_1\le t_2\le \cdots\le t_k$, we have \begin{equation}
\begin{split}
    &\mathbb{E}\left[\varphi\mathopen{}\left(B_{t_1}, . . . , B_{t_k}\right)\mathclose{}\right] \\
    &=\int_E\gamma(\textup{d}x_0)\int_EQ_{t_1}(x_0,\textup{d}x_1)\int_EQ_{t_2-t_1}(x_1,\textup{d}x_2)\cdots\int_EQ_{t_k-t_{k-1}}(x_{k-1},\textup{d}x_k)\varphi(x_1, . . . , x_k).
\end{split}
    \label{porcelain}
\end{equation} This can be proven by induction on $k$:
\begin{proof}
By linearity it suffices to prove this when $\varphi$ is of the form \begin{align*}
    \varphi(x_1, . . . , x_k)=\prod_{j=1}^k\varphi_j(x_j).
\end{align*}
    The Markov Property implies
    $\mathbb{E}[\varphi(B_s)|B_0]=Q_s\varphi(B_0)$, and taking the expectation of both sides, we get \begin{align*}
    \mathbb{E}[\varphi(B_s)]=\mathbb{E}[Q_s\varphi(B_0)]=\int_\Omega Q_s\varphi(B_0(\omega))\mathbb{P}(\textup{d}\omega).
\end{align*} If $\gamma$ denotes the distribution of $B_0$, this then becomes \begin{align*}
    \mathbb{E}[\varphi(B_s)]&=\int_E Q_s\varphi(x_0)\gamma(\textup{d}x_0)=\int_E\gamma(\textup{d}x_0)\int_EQ_s(x_0,\textup{d}x_1)\varphi(x_1).
\end{align*} For the general case, suppose \eqref{porcelain} holds for all choices of $k$ non-negative reals, $k\le p$. The Markov Property implies
\begin{equation*}
    \mathbb{E}\left[\varphi_{p+1}\mathopen{}\left(B_{t_{p+1}}\right)|\mathcal{F}_{t_p}\right]=Q_{t_{p+1}-t_p}\varphi_{p+1}\mathopen{}\left(B_{t_p}\right),
\end{equation*}
and since the random variable $\varphi_1(B_{t_1})\cdots \varphi_p(B_{t_p})$ is $\mathcal{F}_{t_p}$-measurable, from the definition of conditional expectation we may deduce \begin{align*}
    \mathbb{E}\left[\varphi_1\mathopen{}\left(B_{t_1}\right)\mathclose{}\cdots \varphi_p\mathopen{}\left(B_{t_p}\right)\mathclose{}\varphi_{p+1}(B_{t_{p+1}})\right]=\mathbb{E}\left[\varphi_1\mathopen{}\left(B_{t_1}\right)\mathclose{}\cdots \varphi_p\mathopen{}\left(B_{t_p}\right)\mathclose{}Q_{t_{p+1}-t_p}\varphi_{p+1}\mathopen{}\left(B_{t_p}\right)\mathclose{}\right].
\end{align*} Combining this with the inductive assumption, we get \begin{align*}
    &\mathbb{E}\left[\varphi_1\mathopen{}\left(B_{t_1}\right)\mathclose{}\cdots \varphi_p\mathopen{}\left(B_{t_p}\right)\mathclose{}\varphi_{p+1}(B_{t_{p+1}})\right] \\
    &=\int_E\gamma(\textup{d}x_0)\int_EQ_{t_1}(x_0,\textup{d}x_1)\int_EQ_{t_2-t_1}(x_1,\textup{d}x_2)\cdots \int_EQ_{t_p-t_{p-1}}(x_{p-1},\textup{d}x_p)\times \\
    &\varphi_1(x_1)\cdots\varphi_p(x_p)Q_{t_{p+1}-t_p}\varphi_{p+1}(x_p) \\
    &=\int_E\gamma(\textup{d}x_0)\int_EQ_{t_1}(x_0,\textup{d}x_1)\int_EQ_{t_2-t_1}(x_1,\textup{d}x_2)\cdots \int_EQ_{t_p-t_{p-1}}(x_{p-1},\textup{d}x_p)\times \\
    &\varphi_1(x_1)\cdots\varphi_p(x_p)\int_EQ_{t_{p+1}-t_p}(x_p,\textup{d}x_{p+1})\varphi_{p+1}(x_{p+1}) \\
    &=\int_E\gamma(\textup{d}x_0)\int_EQ_{t_1}(x_0,\textup{d}x_1)\int_EQ_{t_2-t_1}(x_1,\textup{d}x_2)\cdots \int_EQ_{t_{p+1}-t_p}(x_p,\textup{d}x_{p+1})\varphi_1(x_1)\cdots \varphi_{p+1}(x_{p+1}),
\end{align*} as desired.
\end{proof}

It turns out the converse of \eqref{porcelain} is also true, in the following sense:
\begin{theorem}
    Suppose $E$ is a Polish space ($E$ is separable, metrizable, and complete with respect to the topology-inducing metric), and $\left(Q_t\right)_{t\ge 0}$ is a transition semigroup on $(E,\mathcal{E})$. Let $E^{[0,\infty)}=\prod_{t \in [0,\infty)}E$ denote the product space, and let $\left(B_t\right)_{t\ge 0}$ denote the canonical process on $E^{[0,\infty)}$, given by $B_t(\omega)=\omega(t)$. Given a probability measure $\gamma$ on $E$, there exists a unique probability measure $\mathbb{P}$ on $\left(E^{[0,\infty)},\otimes_{t \in [0,\infty)}\mathcal{E}\right)$ such that for all continuous functions $\varphi\colon E^k \to \mathbb{R}$ and $k$ reals $0\le t_1\le t_2\le \cdots\le t_k$, \eqref{porcelain} holds.
Moreover, $\left(B_t\right)_{t\ge 0}$ is a Markov process adapted to the filtration $\left(\sigma\mathopen{}\left(\left\{B_\tau:0\le \tau\le t\right\}\right)\right)_{t\ge 0}$, with semigroup $\left(Q_t\right)_{t\ge 0}$.
\end{theorem}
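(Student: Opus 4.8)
The plan is to construct $\mathbb{P}$ by the Kolmogorov extension theorem and then read off the Markov property by a monotone-class argument. First, for every finite list of times $0 \le t_1 < t_2 < \cdots < t_k$ I would define a Borel probability measure $\mu_{t_1,\ldots,t_k}$ on $E^k$ by declaring $\int_{E^k}\varphi\,d\mu_{t_1,\ldots,t_k}$ to be the right-hand side of \eqref{porcelain}; the iterated integral is meaningful for every bounded measurable $\varphi$ because, by a routine monotone-class argument built on property (2) of a transition kernel, $x\mapsto Q_t f(x)$ is bounded and measurable whenever $f$ is, so each successive integration produces a bounded measurable function of the remaining variables. (When the $t_i$ are not distinct the factors $Q_0=\delta$ merely collapse coordinates, so one may always reduce to the distinct case.) Adaptedness of $(B_t)_{t\ge 0}$ to $\big(\sigma(\{B_\tau:0\le\tau\le t\})\big)_{t\ge 0}$ is immediate, so the remaining content is the existence/uniqueness of $\mathbb{P}$ and the identity $\mathbb{E}[f(B_{s+t})\mid\mathcal{F}_s]=Q_tf(B_s)$.

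The key step is to verify the Kolmogorov consistency conditions: for finite $J\subset J'\subset[0,\infty)$ with coordinate projection $\pi^{J'}_J\colon E^{J'}\to E^{J}$, one needs $(\pi^{J'}_J)_\ast\mu_{J'}=\mu_{J}$. It suffices to take $J'=J\cup\{r\}$ with $r\notin J$ and distinguish whether $r$ precedes all of $J$, falls strictly between two consecutive times of $J$, or follows all of $J$. In the first two cases, integrating out the coordinate at time $r$ uses the Chapman--Kolmogorov relation $\int_E Q_a(x,dy)\,Q_b(y,A)=Q_{a+b}(x,A)$, i.e. property (2) of a transition semigroup (with $Q_0=\delta$ for the boundary bookkeeping); in the last case it uses only that each $Q_t(x,\cdot)$ is a probability measure, so $Q_t(x,E)=1$. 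This is precisely where the semigroup hypothesis is indispensable; the computation is pure bookkeeping but must be carried out with care, and I expect it to be the main technical obstacle.

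Once consistency holds, the Kolmogorov extension theorem — which applies because $E$ is Polish — yields a probability measure $\mathbb{P}$ on $\big(E^{[0,\infty)},\otimes_{t}\mathcal{E}\big)$ whose push-forward under $\omega\mapsto(\omega(t_1),\ldots,\omega(t_k))$ is $\mu_{t_1,\ldots,t_k}$; unwinding the definition of $\mu_{t_1,\ldots,t_k}$ through Tonelli for transition kernels then gives \eqref{porcelain} for all bounded measurable $\varphi$, in particular all continuous $\varphi$. For uniqueness, suppose $\mathbb{P}'$ also satisfies \eqref{porcelain} for all continuous $\varphi$ and all time lists. Since $E$ is separable metrizable, $E^k$ is too, so $\mathcal{B}(E^k)=\mathcal{E}^{\otimes k}$ and $C_b(E^k)$ is measure-determining there; hence $\mathbb{P}'$ has the same finite-dimensional marginals as $\mathbb{P}$. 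The finite-dimensional cylinders form a $\pi$-system generating $\otimes_t\mathcal{E}$, so by Dynkin's lemma $\mathbb{P}'=\mathbb{P}$.

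For the Markov property with semigroup $(Q_t)_{t\ge 0}$, fix $s,t\ge 0$ and bounded measurable $f\colon E\to\mathbb{R}$. Since $Q_tf$ is bounded and measurable and $B_s$ is $\mathcal{F}_s$-measurable, $Q_tf(B_s)$ is $\mathcal{F}_s$-measurable, so it suffices to show $\mathbb{E}[f(B_{s+t})\,h]=\mathbb{E}[Q_tf(B_s)\,h]$ for every bounded $\mathcal{F}_s$-measurable $h$. The events $\{B_{r_1}\in A_1,\ldots,B_{r_n}\in A_n\}$ with $0\le r_1\le\cdots\le r_n\le s$ form a $\pi$-system generating $\mathcal{F}_s$, so by a functional monotone-class argument it is enough to check the identity for $h=g_1(B_{r_1})\cdots g_n(B_{r_n})$ with bounded measurable $g_i$. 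Applying \eqref{porcelain} (extended to bounded measurable integrands as above) to the time list $r_1\le\cdots\le r_n\le s\le s+t$ with integrand $g_1\otimes\cdots\otimes g_n\otimes 1\otimes f$, performing the innermost integral $\int_E Q_t(y,dz)f(z)=Q_tf(y)$, and recognising what remains as \eqref{porcelain} for the list $r_1\le\cdots\le r_n\le s$ with integrand $g_1\otimes\cdots\otimes g_n\otimes Q_tf$, gives exactly $\mathbb{E}[g_1(B_{r_1})\cdots g_n(B_{r_n})\,Q_tf(B_s)]$. Restricting $h$ further to $\sigma(B_s)$-measurable functions shows $\mathbb{E}[f(B_{s+t})\mid\sigma(B_s)]=Q_tf(B_s)=\mathbb{E}[f(B_{s+t})\mid\mathcal{F}_s]$, which is the Markov property of Definition \ref{firstdefMarkovprocess} together with the stated relation to the semigroup. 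The remaining routine extensions (measurability of $Q_tf$, Tonelli for kernels, the two $\pi$-system passages) I would relegate to short lemmas.
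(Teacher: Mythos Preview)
Your proposal is correct and follows exactly the approach the paper indicates: the paper does not give a detailed proof of this theorem, stating only that ``This can be proven by invoking Kolmogorov's extension theorem.'' Your sketch is a careful elaboration of precisely that route --- constructing consistent finite-dimensional marginals via Chapman--Kolmogorov, applying Kolmogorov extension (using that $E$ is Polish), and deducing the Markov property by a monotone-class argument --- so there is nothing to compare or correct.
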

This can be proven by invoking Kolmogorov's extension theorem.

Given a Markov process $\left(X_t\right)_{t\ge 0}$ on $\left(\Omega,\Sigma,\mathbb{P}\right)$ taking values in $E$ with transition kernel $\left(Q_t\right)_{t\ge 0}$,
of particular importance is the regularity of the sample paths $t \mapsto X_t(\omega)$, for fixed $\omega \in \Omega$. Often times, such paths may not be particularly regular, say continuous, or possess the weaker property of being c\`{a}dl\`{a}g (right-continuous with left-limits). Sometimes we can modify $\left(X_t\right)_{t\ge 0}$ to obtain a new process $\left(\widetilde{X}_t\right)_{t\ge 0}$ (being a modification means for each $t>0$, $X_t=\widetilde{X}_t$ almost surely) which is more regular than the original. To that end, we introduce the idea of a c\`{a}dl\`{a}g process:
\begin{definition}
    A Stochastic process $\left(X_t\right)_{t\ge 0}$ on a probability space $\left(\Omega,\Sigma,\mathbb{P}\right)$ taking values in $E$ is called c\`{a}dl\`{a}g if for every $\omega \in \Omega$, the sample path $t \mapsto X_t(\omega)$ is c\`{a}dl\`{a}g, so it is right-continuous with left-limits.
\end{definition}
An important theorem in the theory of Markov Processes asserts that under some conditions, one can obtain a c\`{a}dl\`{a}g modification of a given Markov Process. To understand when this is possible, we start by introducing the idea of a Feller semigroup. There are two slightly different definitions common in the literature:
\begin{definition}
    Let $\left(Q_t\right)_{t\ge 0}$ be a transition semigroup on $E$. We say that $\left(Q_t\right)_{t\ge 0}$ is a \emph{Feller semigroup} if:
    \begin{enumerate}
        \item For all $f \in C_0(E)$, $Q_tf \in C_0(E)$, and 
        \item For all $f \in C_0(E)$, $\lVert Q_tf-f\rVert_{C_0(E)} \to 0$ as $t \to 0$.
    \end{enumerate}
    \label{Fellersemigroupdef1}
\end{definition}
Some authors only require that $Q_t$ maps $C(E) \to C(E)$, hence the following alternative definition:
\begin{definition}
Let $\left(Q_t\right)_{t\ge 0}$ be a transition semigroup on $E$. We say that $\left(Q_t\right)_{t\ge 0}$ is a \emph{Feller semigroup} if:
    \begin{enumerate}
        \item For all $f \in C(E)$, $Q_tf \in C(E)$, and 
        \item For all $f \in C(E)$, $\lVert Q_tf-f\rVert_{C(E)} \to 0$ as $t \to 0$.
    \end{enumerate}
    \label{Fellersemigroupdef2}
\end{definition}
With this, the theorem referenced above is as follows.
\begin{theorem}[Theorem 6.15 in \cite{LeGall}]
Let $\left(X_t\right)_{t\ge 0}$ be a Markov process with Feller semigroup $\left(Q_t\right)_{t\ge 0}$ (according to Definition \ref{Fellersemigroupdef1}), adapted to the Filtration $\left(\mathcal{F}_t\right)_{t \in [0,\infty]}$. Set $\widetilde{\mathcal{F}}_\infty=\mathcal{F}_\infty$, and for every $t\ge 0$, set \begin{equation}
    \widetilde{\mathcal{F}}_t=\sigma\left( \mathcal{N}\cup \bigcap_{\substack{s>t}}\mathcal{F}_s\right),
    \label{drizzy}
\end{equation} where $\mathcal{N}$ is the class of all $\mathcal{F}_\infty$-measurable sets with $0$ probability.
Then the process $\left(X_t\right)_{t\ge 0}$ has a c\`{a}dl\`{a}g modification $\left(\widetilde{X}_t\right)_{t\ge 0}$ which is adapted to the Filtration $\left(\widetilde{\mathcal{F}}_t\right)_{t\ge 0}$. Moreover, $\left(\widetilde{X}_t\right)_{t\ge 0}$ is a Markov Process with semigroup $\left(Q_t\right)_{t\ge 0}$.
\label{pesos}
\end{theorem}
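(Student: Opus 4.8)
The plan is to build the modification from \emph{resolvent supermartingales} together with Doob's regularization theorem, carrying everything out inside the one-point compactification $E_\Delta$ of $E$ so as to keep control of escape to infinity. Recall first that for $\lambda>0$ and $f\in C_0(E)$ the resolvent $R_\lambda f:=\int_0^\infty e^{-\lambda t}Q_tf\,\mathrm dt$ (a norm-convergent integral in the Banach space $C_0(E)$) lies in $C_0(E)$, that the range $R_\lambda(C_0(E))$ is independent of $\lambda$, and that it is dense in $C_0(E)$ because $\lambda R_\lambda f\to f$ as $\lambda\to\infty$, a standard consequence of the strong continuity in Definition \ref{Fellersemigroupdef1}. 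From the identity $Q_hR_\lambda f=e^{\lambda h}\bigl(R_\lambda f-\int_0^h e^{-\lambda u}Q_uf\,\mathrm du\bigr)$ and the Markov property it follows that, for $f\ge0$, the process $t\mapsto e^{-\lambda t}R_\lambda f(X_t)$ is a nonnegative $(\mathcal F_t)$-supermartingale whose expectation is continuous in $t$.

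First I would fix a countable family $\{g_n\}\subset C_0(E)$ of nonnegative functions whose linear span is dense in $C_0(E)$ (possible since $E$ is metrizable, locally compact and $\sigma$-compact, hence second countable, so $C_0(E)$ is separable), set $u_n:=R_1g_n$, and note that $\{u_n\}$ again has dense linear span (apply the bounded operator $R_1$ and use that its range is dense). Applying Doob's maximal and upcrossing inequalities to the countably many supermartingales $t\mapsto e^{-t}u_n(X_t)$ produces an $\mathcal F_\infty$-null set $N_0\in\mathcal N$ off which, for every $n$ and every $t\ge0$, the limits of $u_n(X_s)$ as $s\to t$ along the rationals from either side exist in $\mathbb R$. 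Since $E_\Delta$ is compact metrizable and $C_0(E)$ together with the constants is dense in $C(E_\Delta)$ by Stone--Weierstrass, any sequence in $E$ along which every $u_n$ converges has a unique limit point in $E_\Delta$, hence converges in $E_\Delta$; therefore, off $N_0$, the path $s\mapsto X_s$ has left and right limits in $E_\Delta$ at every $t$. I then set $\widetilde X_t:=\lim_{s\downarrow t,\,s\in\mathbb Q}X_s$ off $N_0$ (and a fixed point of $E$ on $N_0$), and check in the usual way that $\widetilde X$ is an $E_\Delta$-valued c\`adl\`ag process adapted to $(\widetilde{\mathcal F}_t)$ --- its value at $t$ is $\bigcap_{s>t}\mathcal F_s$-measurable, and the exceptional set lies in $\mathcal N$.

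The heart of the matter --- and the step I expect to be the main obstacle --- is to show that $\widetilde X$ is an $E$-valued modification of $X$. Fix $t\ge0$. A direct computation with the semigroup gives $u_n(X_{t+1/k})\to u_n(X_t)$ in $L^2$ as $k\to\infty$: expanding $\mathbb E[(u_n(X_{t+1/k})-u_n(X_t))^2]$ and using $\mathbb E[\varphi(X_{t+1/k})\mid\mathcal F_t]=Q_{1/k}\varphi(X_t)$ together with $Q_h\varphi\to\varphi$ uniformly for $\varphi=u_n$ and $\varphi=u_n^2$, each of the three terms tends to $\mathbb E[u_n(X_t)^2]$. Since also $u_n(X_{t+1/k})\to u_n(\widetilde X_t)$ almost surely, we get $u_n(\widetilde X_t)=u_n(X_t)$ a.s.\ for every $n$, and because $\{u_n\}$ separates points of $E$ and separates each point of $E$ from $\Delta$, this forces $\widetilde X_t=X_t$ a.s.; in particular $\mathbb P(\widetilde X_t=\Delta)=0$ for each fixed $t$. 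To rule out that $\widetilde X$ ever visits $\Delta$, I would use $w:=\sum_n2^{-n}\|u_n\|_\infty^{-1}u_n=R_1\bigl(\sum_n2^{-n}\|u_n\|_\infty^{-1}g_n\bigr)\in C_0(E)$, which is strictly positive on $E$ (for each $x\in E$ some $u_n(x)>0$, since the nonnegative functions $u_n$ separate points and not all of them vanish at $x$) while $w(\Delta)=0$. The right-limit process $t\mapsto e^{-t}w(\widetilde X_t)$ of the supermartingale $e^{-t}w(X_t)$ is a nonnegative right-continuous $(\mathcal F_{t+})$-supermartingale, hence is absorbed at $0$; it vanishes precisely when $\widetilde X_t=\Delta$, so if $\widetilde X$ reached $\Delta$ before some integer $N$ with positive probability we would contradict $\mathbb P(\widetilde X_N=\Delta)=0$. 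Thus a.s.\ $\widetilde X$ takes its values in $E$, is c\`adl\`ag, and is a modification of $X$.

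It remains to verify the Markov property for $\widetilde X$ with semigroup $(Q_t)$. For $f\in C_0(E)$, $t,r\ge0$, and rationals $a_j\downarrow t$, $b_k\downarrow t+r$, I would start from $\mathbb E[f(X_{b_k})\mid\mathcal F_{a_j}]=Q_{b_k-a_j}f(X_{a_j})$, let $k\to\infty$ using $f(X_{b_k})\to f(\widetilde X_{t+r})$ boundedly and continuity of $s\mapsto Q_sf$ in $C_0(E)$, and then let $j\to\infty$ using L\'evy's downward theorem with $\bigcap_j\mathcal F_{a_j}=\bigcap_{s>t}\mathcal F_s$, the convergence $X_{a_j}\to\widetilde X_t$ in $E$, and $Q_{b_k-a_j}f\to Q_rf$ uniformly, obtaining $\mathbb E[f(\widetilde X_{t+r})\mid\widetilde{\mathcal F}_t]=Q_rf(\widetilde X_t)$ a.s.\ (adjoining the $\mathbb P$-null sets $\mathcal N$ does not change conditional expectations). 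A monotone class argument, using that $C_0(E)$ generates $\mathcal E$, extends this from $f\in C_0(E)$ to all bounded measurable $f$, which is exactly the defining relation of a Markov process with semigroup $(Q_t)$ relative to $(\widetilde{\mathcal F}_t)$. This would complete the proof.
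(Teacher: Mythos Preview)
Your proposal is essentially correct and follows the classical route (resolvents plus supermartingale regularization inside the one-point compactification), which is in fact how Le~Gall proves the cited Theorem~6.15. Note, however, that the present paper does \emph{not} prove Theorem~\ref{pesos} in the stated generality: it only establishes the special case recorded in the Proposition immediately after, where $t\mapsto Q_t$ is additionally assumed continuous in the operator norm on $C(E)$.

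The two arguments diverge at the very first step. You build nonnegative supermartingales $e^{-\lambda t}R_\lambda f(X_t)$ and invoke Doob's upcrossing inequalities to manufacture one-sided rational limits; the paper instead exploits its extra hypothesis to write $Q_t=\exp(At)$ for a bounded generator $A$ (Lemma~\ref{nastyc}), obtains the quantitative estimate $\mathbb{E}[\widetilde\rho(B_t,B_s)]\le M_T(t-s)$ (Theorem~\ref{euphoria}), and hence a uniform $L^1$ bound on the $\widetilde\rho$-variation over rational partitions (Lemma~\ref{goddid}). Finite variation along rationals then yields one-sided limits directly (Lemma~\ref{lemmatouse}), with no martingale theory at all. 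Your approach is the one genuinely required for Theorem~\ref{pesos} under Definition~\ref{Fellersemigroupdef1}; the paper's variation argument is more elementary but really needs norm-continuity, since without a bounded generator there is no reason for $\mathbb{E}[\widetilde\rho(B_t,B_s)]$ to be $O(t-s)$. A further structural difference: you must control escape to infinity via $E_\Delta$ and the strictly positive resolvent function $w$, whereas the paper, working with Definition~\ref{Fellersemigroupdef2} and the bounded metric $\widetilde\rho$, never needs a compactification. Finally, your verification of the Markov property (L\'evy's downward theorem along rationals) and the paper's (dominated convergence along $r\downarrow s$ using right-continuity of $\widetilde B$) are close cousins; both ultimately rest on Lemma~\ref{coachella} to pass from $\mathcal F_s^+$ to $\widetilde{\mathcal F}_s$.
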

The main focus of our project was proving a specific case of this theorem (for example, when the space $E$ is finite). We considered the following special case:
\begin{proposition}
    Let $\left(\Omega,\Sigma, \mathbb{P}\right)$ be the underlying probability space. Let $\left(B_t\right)_{t\ge 0}$ be a Markov process on $\Omega$ with values in $E$, adapted to the Filtration $\left(\mathcal{F}_t\right)_{t\in [0,\infty]}$, with Feller semigroup $\left(Q_t\right)_{t\ge 0}$ (according to Definition \ref{Fellersemigroupdef2}). Suppose additionally that $t \mapsto Q_t$ is continuous with respect to the operator norm topology on the space of bounded linear operators on $C(E)$. Set \begin{align*}
    \mathcal{F}_t^+\coloneqq \bigcap_{\substack{ s>t}}\mathcal{F}_s,
\end{align*} and $\widetilde{\mathcal{F}}_t=\sigma\mathopen{}\left(\mathcal{F}_t^+\cup \mathcal{N}\right)\mathclose{}$, where $\mathcal{N}$ is the class of all $\mathcal{F}_\infty$-measurable sets with zero probability. 
Then the process $\left(B_t\right)_{t\ge 0}$ has a c\`{a}dl\`{a}g modification $\left(\widetilde{B}_t\right)_{t\ge 0}$ which is adapted to the Filtration $\left(\widetilde{\mathcal{F}}_t\right)_{t\ge 0}$. Moreover, $\left(\widetilde{B}_t\right)_{t\ge 0}$ is a Markov Process with semigroup $\left(Q_t\right)_{t\ge 0}$, adapted to the filtration $\left(\widetilde{\mathcal{F}}_t\right)_{t\ge 0}$.
\end{proposition}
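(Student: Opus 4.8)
The plan is to run the classical supermartingale--resolvent regularization argument, with the norm-continuity hypothesis on $t\mapsto Q_t$ serving to smooth the final steps. First fix the law $\gamma$ of $B_0$ and a countable family $\{f_n\}_{n\ge 1}$ dense in $C_0(E)$; such a family exists because $E$, being metrizable, locally compact and $\sigma$-compact, is second countable, so $C_0(E)$ is separable. View each $f_n$ also as a bounded element of $C(E)$, and (by adding the constants $\pm\lVert f_n\rVert$ where needed) assume nonnegativity where it is used. For $p>0$ and $f\in C(E)$ set $U_pf=\int_0^\infty e^{-pt}Q_tf\,dt$, a Bochner integral in $C(E)$ --- legitimate since $t\mapsto Q_tf$ is norm-continuous --- with $\lVert U_pf\rVert\le\lVert f\rVert/p$. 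From $Q_sU_pf=e^{ps}\int_s^\infty e^{-pt}Q_tf\,dt$ one reads off two facts: if $f\ge0$ then $Q_sU_pf\le e^{ps}U_pf$ pointwise, and $pU_pf\to f$ uniformly as $p\to\infty$ (the latter using $\lVert Q_tf-f\rVert\to0$ as $t\to0$). The second fact shows that the countable family $\mathcal G=\{U_pf_n:n\ge1,\ p\in\mathbb{Q}_{>0}\}$ of bounded continuous functions separates the points of $E$, and that its closed linear span in $C(E)$ contains $C_0(E)$, so $\mathcal G$ is measure-determining on $E$.

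The heart of the argument is supermartingale regularization. For $g=U_pf_n\in\mathcal G$ with $f_n\ge0$, the time-homogeneous Markov property together with $Q_{t-s}g\le e^{p(t-s)}g$ gives $\mathbb{E}[e^{-pt}g(B_t)\mid\mathcal F_s]\le e^{-ps}g(B_s)$ for $s\le t$, so $t\mapsto e^{-pt}g(B_t)$ is a bounded nonnegative $(\mathcal F_t)$-supermartingale. Restricting its time index to $\mathbb{Q}_{\ge 0}$ and applying Doob's upcrossing and maximal inequalities, then taking a union over the countably many rational intervals, the countably many $g\in\mathcal G$, and the integer horizons $T$, I obtain a $\mathbb{P}$-null set $N\in\mathcal N$ such that for every $\omega\notin N$ and every $g\in\mathcal G$ the map $s\mapsto g(B_s(\omega))$, $s\in\mathbb{Q}_{\ge 0}$, is locally bounded and has finite left and right limits at every $t\ge0$.

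Next I would define the modification: for $\omega\notin N$ set $\widetilde B_t(\omega)=\lim_{s\downarrow t,\,s\in\mathbb{Q}}B_s(\omega)$ provided this limit exists in $E$, and $\widetilde B_t(\omega)=x_0$ (a fixed point of $E$) for $\omega\in N$; granting existence, $t\mapsto\widetilde B_t(\omega)$ is right-continuous by construction and has left limits by the same reasoning applied to left rational limits, so $(\widetilde B_t)_{t\ge0}$ is c\`adl\`ag for every $\omega$. To see the right rational limit exists in $E$, pass to the one-point compactification $E_\Delta$, which is compact metrizable: the net $\{B_s(\omega):s\downarrow t\}$ has subsequential limits along rationals in $E_\Delta$, and by the previous paragraph any two of them agree on every $g\in\mathcal G$, hence coincide once both lie in $E$ (as $\mathcal G$ separates points), so the limit exists in $E_\Delta$. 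When $E$ is compact --- in particular in the finite state space case that motivates this work --- one has $E_\Delta=E$ and there is nothing further to check. In general one must exclude the possibility that the limit equals $\Delta$; I expect this to be the main technical obstacle, and the standard way around it uses conservativity of the Feller semigroup ($Q_t\mathbf 1=\mathbf 1$) to force the law of $B_t$, and thence of the right rational limits, to charge no mass at $\Delta$ --- the delicate point being to upgrade a statement true almost surely for each fixed $t$ to one true almost surely for all $t$ simultaneously.

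It remains to verify the conclusions. Adaptedness: for $u>t$, $\widetilde B_t=\lim_{s\downarrow t,\,s\in\mathbb{Q}\cap(t,u)}B_s$ is $\mathcal F_u$-measurable off $N$, so $\widetilde B_t$ is $\mathcal F_t^+$-measurable off $N$, hence $\widetilde{\mathcal F}_t$-measurable. Modification: for fixed $t$, $g\in\mathcal G$ and bounded $\mathcal F_t$-measurable $h$, the Markov property gives $\mathbb{E}[g(B_s)h]=\mathbb{E}[(Q_{s-t}g)(B_t)h]$ for $s>t$; letting $s\downarrow t$ along rationals, bounded convergence and the definition of $\widetilde B_t$ turn the left side into $\mathbb{E}[g(\widetilde B_t)h]$, while $Q_{s-t}g\to g$ uniformly turns the right side into $\mathbb{E}[g(B_t)h]$, so $\mathbb{E}[g(\widetilde B_t)\mid\mathcal F_t]=g(B_t)$ for all $g\in\mathcal G$; taking $h=\mathbf 1_{\{B_t\in A\}}$ for Borel $A\subseteq E$ and using that $\mathcal G$ is measure-determining, the conditional law of $\widetilde B_t$ given $B_t$ is $\delta_{B_t}$, i.e.\ $\widetilde B_t=B_t$ a.s. Markov property: for $A\in\mathcal F_s^+$ and $f\in C(E)$, the Markov property of $B$ gives $\mathbb{E}[\mathbf 1_A f(B_{t'})]=\mathbb{E}[\mathbf 1_A(Q_{t'-s'}f)(B_{s'})]$ for all $t'>s'>s$; letting $s'\downarrow s$ and $t'\downarrow t$ along rationals, the left side tends to $\mathbb{E}[\mathbf 1_A f(\widetilde B_t)]$, and --- here the norm-continuity $Q_{t'-s'}\to Q_{t-s}$ is exactly what is needed --- the right side tends to $\mathbb{E}[\mathbf 1_A(Q_{t-s}f)(\widetilde B_s)]$. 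Since both sides vanish when $A\in\mathcal N$, and $\mathcal F_s^+\cup\mathcal N$ is a $\pi$-system generating $\widetilde{\mathcal F}_s$, Dynkin's lemma yields $\mathbb{E}[f(\widetilde B_t)\mid\widetilde{\mathcal F}_s]=(Q_{t-s}f)(\widetilde B_s)$ for $f\in C(E)$; a functional monotone-class argument extends this to all bounded measurable $f$, and since the right side is $\sigma(\widetilde B_s)$-measurable the Markov property in the sense of Definition \ref{firstdefMarkovprocess} follows as well. Hence $(\widetilde B_t)_{t\ge0}$ is the desired c\`adl\`ag Markov modification with semigroup $(Q_t)_{t\ge0}$ adapted to $(\widetilde{\mathcal F}_t)_{t\ge0}$.
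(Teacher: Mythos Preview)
Your sketch is the classical resolvent--supermartingale regularization argument (essentially Le Gall's Theorem~6.15), and it is a genuinely different route from the paper's. The paper exploits the norm-continuity hypothesis much more heavily and much earlier: it first shows $Q_t=e^{At}$ for a bounded generator $A$ (Lemma~\ref{nastyc}), from which the Lipschitz estimate $\mathbb{E}[\widetilde\rho(B_t,B_s)]\le M_T(t-s)$ follows directly (Theorem~\ref{euphoria}); this in turn gives almost-sure finiteness of the $\widetilde\rho$-variation of $s\mapsto B_s(\omega)$ over rational partitions of any bounded interval (Lemmas~\ref{goddid}--\ref{canon}), and existence of left and right rational limits in $E$ is then a Cauchy argument using completeness of the metric (Lemma~\ref{lemmatouse}). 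The modification property is also obtained more directly, by showing $\lim_{q\to t,\,q\in\mathbb{Q}}B_q=B_t$ a.s.\ via the same variation bound (Lemma~\ref{Yanni}), rather than through a measure-determining-class argument. Your approach, by contrast, uses only the Feller property for almost the entire argument and invokes norm-continuity only in the final Markov-property step, so it would generalize more readily; the price is the one-point compactification and the technical check (which you correctly flag as the main obstacle) that rational limits do not escape to $\Delta$. The paper's variation argument sidesteps this compactification issue entirely: bounded $\widetilde\rho$-variation forces the rational path to be Cauchy as $s\to t^\pm$, so limits exist in $E$ with no compactness needed. In short, the paper's argument is more elementary---no upcrossing inequalities, no $E_\Delta$, no separating family---but is tightly bound to the strong norm-continuity hypothesis, whereas yours is the robust textbook machinery carrying its standard extra baggage.
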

We note the condition that $t \mapsto Q_t$ is continuous with respect to the operator norm on the space of bounded linear operators on $C(E)$  is superfluous when $E$ is finite (if $\left(Q_t\right)_{t\ge 0}$ is a Feller semigroup on a finite space $E$, then it is necessarily continuous in the operator norm topology), which was the initial focus of our project.

\section{Preliminaries}
We start by introducing a lemma that allows us to characterise $\sigma$-algebras of the form \eqref{drizzy}:
\begin{lemma}
Let $\Sigma \subset \mathcal{F}_\infty$ be a $\sigma$-algebra on $\Omega$, and let $\mathcal{N}$ denote the collection of $\mathcal{F}_\infty$-measurable sets with $0$ probability. Then
\begin{align*}
    \sigma(\Sigma\cup \mathcal{N})= \left\{G \in \mathcal{F}_\infty:\exists F \in \Sigma\text{ such that }\mathbb{P}(F\cap G^\mathsf{c})=\mathbb{P}(F^\mathsf{c}\cap G)=0\right\},
\end{align*} where $G^\mathsf{c}$ denotes the complement of $G$ in $\Omega$. 
    \label{coachella}
\end{lemma}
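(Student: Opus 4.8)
The plan is to prove the two inclusions separately. Let me denote the right-hand side by $\mathcal{G}$, i.e.
\[
\mathcal{G} = \left\{G \in \mathcal{F}_\infty : \exists F \in \Sigma \text{ with } \mathbb{P}(F \cap G^\mathsf{c}) = \mathbb{P}(F^\mathsf{c} \cap G) = 0\right\}.
\]
For the inclusion $\mathcal{G} \subset \sigma(\Sigma \cup \mathcal{N})$, I would take $G \in \mathcal{G}$ with witnessing set $F \in \Sigma$. Then $G \triangle F = (G \cap F^\mathsf{c}) \cup (G^\mathsf{c} \cap F)$ has probability zero, so $G \triangle F \in \mathcal{N}$. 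Writing $G = (F \setminus (F \setminus G)) \cup (G \setminus F)$ — more cleanly, $G = (F \cap (G \triangle F)^\mathsf{c}) \cup (G \cap (G \triangle F))$ — expresses $G$ using $F \in \Sigma$ and subsets of the null set $G \triangle F$, all of which lie in $\mathcal{N}$ (since $\mathcal{N}$ is closed under taking $\mathcal{F}_\infty$-measurable subsets, and all sets here are $\mathcal{F}_\infty$-measurable as $G, F \in \mathcal{F}_\infty$). Hence $G \in \sigma(\Sigma \cup \mathcal{N})$.

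For the reverse inclusion $\sigma(\Sigma \cup \mathcal{N}) \subset \mathcal{G}$, the standard move is to show that $\mathcal{G}$ is itself a $\sigma$-algebra containing both $\Sigma$ and $\mathcal{N}$; since $\sigma(\Sigma \cup \mathcal{N})$ is the smallest such, this finishes the proof. Containment of $\Sigma$ is immediate (take $F = G$), and containment of $\mathcal{N}$ is immediate (take $F = \emptyset$, using that a null set $G$ satisfies $\mathbb{P}(G) = 0 = \mathbb{P}(\emptyset \cap G^\mathsf{c})$). It remains to verify the three $\sigma$-algebra axioms for $\mathcal{G}$. Closure under complementation: if $G \in \mathcal{G}$ with witness $F$, then $F^\mathsf{c}$ witnesses $G^\mathsf{c} \in \mathcal{G}$, since $\mathbb{P}(F^\mathsf{c} \cap G) = \mathbb{P}((F^\mathsf{c})^\mathsf{c} \cap G^\mathsf{c}{}^\mathsf{c})$... more carefully, $\mathbb{P}(F^\mathsf{c} \cap (G^\mathsf{c})^\mathsf{c}) = \mathbb{P}(F^\mathsf{c} \cap G) = 0$ and $\mathbb{P}((F^\mathsf{c})^\mathsf{c} \cap G^\mathsf{c}) = \mathbb{P}(F \cap G^\mathsf{c}) = 0$. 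And $\Omega \in \mathcal{G}$ trivially. Closure under countable unions: given $G_n \in \mathcal{G}$ with witnesses $F_n \in \Sigma$, I claim $F := \bigcup_n F_n \in \Sigma$ witnesses $G := \bigcup_n G_n$. Here one uses the containments $G \setminus F \subset \bigcup_n (G_n \setminus F_n)$ and $F \setminus G \subset \bigcup_n (F_n \setminus G_n)$, together with countable subadditivity of $\mathbb{P}$, to conclude $\mathbb{P}(G \cap F^\mathsf{c}) = \mathbb{P}(F \cap G^\mathsf{c}) = 0$.

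I do not expect a genuine obstacle here; the argument is a routine "completion of a $\sigma$-algebra" computation. The only point requiring a small amount of care is the countable-union step, specifically the set-theoretic inclusions $\left(\bigcup_n G_n\right) \setminus \left(\bigcup_n F_n\right) \subset \bigcup_n (G_n \setminus F_n)$ and its symmetric counterpart, which make the subadditivity estimate go through; everything else is bookkeeping with symmetric differences and the hypothesis that $\mathcal{N}$ is downward closed within $\mathcal{F}_\infty$. One should also note throughout that every set written down is $\mathcal{F}_\infty$-measurable, so that membership in $\mathcal{N}$ (which by definition requires $\mathcal{F}_\infty$-measurability) is legitimate — this is why the hypothesis $\Sigma \subset \mathcal{F}_\infty$ is needed.
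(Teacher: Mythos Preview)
Your proposal is correct and follows essentially the same approach as the paper: both show the right-hand side is a $\sigma$-algebra containing $\Sigma$ and $\mathcal{N}$ (via the same witnesses $F=G$, $F=\emptyset$, $F^\mathsf{c}$, and $\bigcup_n F_n$, with the same set-theoretic inclusions for the countable-union step), and both obtain the reverse inclusion by expressing $G$ in terms of $F$ and null sets. The only cosmetic difference is that the paper writes $G=(F\cup N_1)\setminus N_2$ while you use $G=(F\cap(G\triangle F)^\mathsf{c})\cup(G\cap(G\triangle F))$, which amounts to the same thing.
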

\begin{proof}
Let \begin{align}
        \mathfrak{S}\coloneqq \left\{G \in \mathcal{F}_\infty:\exists F \in \Sigma\text{ such that }\mathbb{P}(F\cap G^\mathsf{c})=\mathbb{P}(F^\mathsf{c}\cap G)=0\right\}.
        \label{mathcalXdef}
    \end{align}
    First note that $\Sigma \subset \mathfrak{S}$, since for any $F \in \Sigma$, we have $F\cap F^\mathsf{c}=\emptyset$. Similarly, for any $N \in \mathcal{N}$, we can take $\emptyset \in \Sigma$, in which case, $\mathbb{P}\bigl(\emptyset \cap N^\mathsf{c}\bigr)=\mathbb{P}\bigl(\Omega\cap N\bigr)=0$. Hence $\mathcal{N}\subset \mathfrak{S}$. So \begin{equation}
    \Sigma\cup \mathcal{N}\subset \mathfrak{S}.
        \label{alacarte}
    \end{equation}
    We now show that $\mathfrak{S}$ is a $\sigma$-algebra. We first show that $\mathfrak{S}$ is closed under complements. Consider any $G \in \mathfrak{S}$, and let $F \in \Sigma$ be such that $\mathbb{P}(F\cap G^\mathsf{c})=\mathbb{P}(F^\mathsf{c}\cap G)=0$. Then \begin{align*}
        &\mathbb{P}\mathopen{}\left(F^\mathsf{c}\cap \bigl(G^\mathsf{c}\bigr)^{\mathsf{c}}\right)\mathclose{}=\mathbb{P}\bigl(F^\mathsf{c}\cap G\bigr)=0,\quad\text{ and } \\
&\mathbb{P}\mathopen{}\left(\bigl(F^\mathsf{c}\bigr)^\mathsf{c}\cap G^\mathsf{c}\right)\mathopen{}=\mathbb{P}\bigl(F\cap G^\mathsf{c}\bigr)=0.
    \end{align*} So this means $G^\mathsf{c} \in \mathfrak{S}$ if $G \in \mathfrak{S}$. Now, we just have to show that $\mathfrak{S}$ is closed under countable unions. To that end, let $I$ be a countable index set, and let $\left\{G_i:i \in I\right\}$ be a countable collection of elements of $\mathfrak{S}$. For each $i \in I$, let $F_i \in \Sigma$ be such that \begin{align*}
        \mathbb{P}\bigl(F_i\cap G_i^\mathsf{c}\bigr)=\mathbb{P}\bigl(F_i^\mathsf{c}\cap G_i\bigr)=0.
    \end{align*} Then $\bigcup_{j \in I}F_j \in \Sigma$ since $\Sigma$ is a $\sigma$-algebra, and \begin{align*}
        \left(\bigcup_{i \in I}G_i\right)\cap \left(\bigcup_{j \in I}F_j\right)^\mathsf{c}&=\bigcup_{i \in G_i}\left(G_i\cap \bigcap_{j \in I}F_j^\mathsf{c}\right)\subset \bigcup_{i \in G_i}\left(G_i\cap F_i^\mathsf{c}\right).
    \end{align*} Since $\mathbb{P}\bigl(G_i\cap F_i^\mathsf{c}\bigr)=0$ for each $i$, this implies \begin{align*}
        \mathbb{P}\mathopen{}\left( \left(\bigcup_{i \in I}G_i\right)\cap \left(\bigcup_{j \in I}F_j\right)^\mathsf{c}\right)\mathclose{}=0.
    \end{align*} A similar argument (just switch $G_i$ with $F_i$ in the argument above) shows \begin{align*}
        \mathbb{P}\mathopen{}\left( \left(\bigcup_{i \in I}G_i\right)^\mathsf{c}\cap \left(\bigcup_{j \in I}F_j\right)\right)\mathclose{}=0.
    \end{align*} So $\bigcup_{i \in I}G_i \in \mathfrak{S}$. This shows $\mathfrak{S}$ is closed under countable unions. Given it is also closed under complements, and \eqref{alacarte} implies $\emptyset, \Omega \subset \mathfrak{S}$, this means $\mathfrak{S}$ is a $\sigma$-algebra. From \eqref{alacarte}, we now know that $\sigma(\Sigma\cup \mathcal{N}) \subset \mathfrak{S}$. To get the reverse inclusion, let $G \in \mathfrak{S}$, and $F \in \Sigma$ be such that \begin{equation*}
        \mathbb{P}(F\cap G^\mathsf{c})=\mathbb{P}(F^\mathsf{c}\cap G)=0.
    \end{equation*} Note that this condition implies $\mathbb{P}\bigl(F\cup G\setminus F\cap G\bigr)=0$, in which case we can write $G$ as $G=\bigl(F\cup N_1\bigr)\setminus N_2$ for sets $N_1,N_2 $ of measure $0$. However, $\bigl(F\cup N_1\bigr)\setminus N_2 \in \sigma(\Sigma\cup \mathcal{N})$, and so we get $G \in \sigma(\Sigma\cup \mathcal{N})$. Hence $\mathfrak{S}\subset \sigma(\Sigma \cup \mathcal{N}$, and so we may deduce\begin{align*}
       \sigma(\Sigma\cup \mathcal{N})=\mathfrak{S}.
    \end{align*} This completes the proof of the lemma.
\end{proof}

\noindent
\newline

The advantage of the stronger condition of $Q_t$ being continuous in the operator norm topology is illustrated in the following lemma:
\begin{lemma}
    Let $\left(Q_t\right)_{t\ge 0}$ be a Feller semigroup (per Definition \ref{Fellersemigroupdef2}) on $(E,\mathcal{E})$. Suppose further that $t \mapsto Q_t$ is continuous in with respect to the operator norm on the space of bounded linear operators on $C(E)$. Then $Q_t=\exp(At)$ for some bounded linear map $A\colon C(E) \to C(E)$.
    \label{nastyc}
\end{lemma}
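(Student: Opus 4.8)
The plan is to run the classical argument that shows a norm-continuous one-parameter semigroup of bounded operators on a Banach space has a bounded infinitesimal generator. I would work inside the Banach algebra $\mathcal{B}(C(E))$ of bounded linear operators on the Banach space $C(E)$, regarding each $Q_t$ as an element of $\mathcal{B}(C(E))$ (this is exactly what the Feller property in Definition \ref{Fellersemigroupdef2} buys us: $Q_t$ maps $C(E)$ into itself and is bounded). I would freely use the standard facts about the Riemann (equivalently Bochner) integral of a norm-continuous function $s \mapsto R_s$ from a compact interval into $\mathcal{B}(C(E))$: the integral $\int_a^b R_s\,\mathrm{d}s$ exists in $\mathcal{B}(C(E))$, satisfies $\lVert\int_a^b R_s\,\mathrm{d}s\rVert \le \int_a^b \lVert R_s\rVert\,\mathrm{d}s$, and obeys $\frac{\mathrm{d}}{\mathrm{d}t}\int_0^t R_s\,\mathrm{d}s = R_t$. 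First I would build an invertible ``average'' of the semigroup: for $t>0$ set $V_t \coloneqq \int_0^t Q_s\,\mathrm{d}s$. Since $Q_0 = I$ and $s \mapsto Q_s$ is norm continuous, $\lVert \frac1t V_t - I\rVert = \lVert \frac1t\int_0^t(Q_s - I)\,\mathrm{d}s\rVert \le \sup_{0 \le s \le t}\lVert Q_s - I\rVert \to 0$ as $t \to 0^+$, so one can fix $t_0>0$ with $\lVert \frac1{t_0}V_{t_0} - I\rVert < 1$; then $\frac1{t_0}V_{t_0}$, hence $V_{t_0}$, is invertible in $\mathcal{B}(C(E))$ by the Neumann series. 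The candidate generator is $A \coloneqq (Q_{t_0} - I)V_{t_0}^{-1} \in \mathcal{B}(C(E))$.

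Next I would show $\frac1h(Q_h - I) \to A$ in operator norm as $h \to 0^+$. Using the semigroup law and a shift of variable,
\[
(Q_h - I)V_{t_0} = \int_0^{t_0}(Q_{s+h} - Q_s)\,\mathrm{d}s = \int_{t_0}^{t_0+h} Q_s\,\mathrm{d}s - \int_0^h Q_s\,\mathrm{d}s ,
\]
so dividing by $h$ and letting $h \to 0^+$, norm continuity and the fundamental theorem of calculus give $\frac1h(Q_h - I)V_{t_0} \to Q_{t_0} - I$; multiplying on the right by the fixed operator $V_{t_0}^{-1}$ yields $\frac1h(Q_h - I) \to A$. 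Combined with the semigroup law in the forms $Q_{t+h} = Q_h Q_t = Q_t Q_h$ (and, for the left derivative at $t>0$, with $Q_t - Q_{t-k} = (Q_k - I)Q_{t-k}$), this shows that $t \mapsto Q_t$ is norm differentiable on $[0,\infty)$ with $Q_t' = A Q_t = Q_t A$.

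Finally I would solve this operator ODE. Since $A$ is bounded, $e^{tA} \coloneqq \sum_{n \ge 0} t^n A^n / n!$ converges in operator norm for every $t$, is norm differentiable with $\frac{\mathrm{d}}{\mathrm{d}t}e^{tA} = A e^{tA} = e^{tA}A$, and, being a norm limit of polynomials in $A$, commutes with $A$ and with every $Q_t$. Hence $\frac{\mathrm{d}}{\mathrm{d}t}\bigl(Q_t e^{-tA}\bigr) = (AQ_t)e^{-tA} - Q_t\bigl(A e^{-tA}\bigr) = 0$, so $Q_t e^{-tA}$ is constant in $t$ and equals its value $Q_0 e^{0} = I$ at $t = 0$; multiplying by $e^{tA}$ gives $Q_t = e^{tA}$, which is the assertion of the lemma.

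The only genuinely delicate ingredient is the operator-valued integration underpinning the first two steps — existence of the Riemann integral of a norm-continuous $\mathcal{B}(C(E))$-valued function, the norm estimate, and the fundamental theorem of calculus — together with the Neumann-series invertibility argument that rests on it. Everything after that is routine manipulation with the semigroup identity, and the hypothesis that $t \mapsto Q_t$ is continuous in operator norm (rather than merely strongly continuous, as in Definition \ref{Fellersemigroupdef2}) is used precisely to make $\frac1t V_t$ close to the identity \emph{in operator norm}, which is what forces the generator $A$ to be bounded.
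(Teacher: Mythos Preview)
Your argument is correct and is in fact the textbook route (as in Pazy or Engel--Nagel) to the bounded generator of a norm-continuous semigroup: average the semigroup to get an invertible $V_{t_0}$, extract $A$ from $(Q_{t_0}-I)V_{t_0}^{-1}$, differentiate, and solve the operator ODE $Q_t'=AQ_t$ via the integrating factor $e^{-tA}$. The paper takes a genuinely different path: it defines $\log(Q_t)$ by the power series \eqref{oplogdef} for $t$ small, proves directly (via a Cauchy integral formula argument) that $\exp\circ\log=\Id$ on its domain and that $\log$ is additive on commuting operators near $I$, and then solves the Cauchy functional equation $\widehat Q(s+t)=\widehat Q(s)+\widehat Q(t)$ for the continuous map $\widehat Q(t)=\log Q_t$ to conclude $\widehat Q(t)=tA$ with $A=\widehat Q(w)/w$. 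Your approach is cleaner in that it avoids the somewhat delicate power-series and complex-analytic manipulations, and it foreshadows the strongly continuous theory (where the same $V_{t_0}$ trick identifies the domain of an unbounded generator); the paper's approach, on the other hand, gives a more explicit handle on $A$ as a logarithm and makes the role of commutativity of the $Q_t$ completely transparent. Either way, the norm-continuity hypothesis is doing the same work: forcing the difference quotients $(Q_h-I)/h$ to converge in operator norm rather than merely strongly.
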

Let $\mathfrak{B}\mathopen{}\left(C(E)\right)\mathclose{}$ denote the space of bounded linear operators on $C(E)$, equipped with the operator norm. Note that $\mathfrak{B}\mathopen{}\left(C(E)\right)\mathclose{}$ is a Banach space with respect to the operator norm.  First note that since $Q_0=\Id$, the identity map, that for $t$ small enough, $\lVert Q_t-\Id\rVert_{\mathfrak{B}(C(E))}<1$. Consequently, for $t$ small enough, we can define $\log\mathopen{}\left(Q_t\right)\mathclose{}$ via the common taylor series. That is,  define 
    \begin{align*}
        \log \colon \left\{P \in \mathfrak{B}(C(E)): \lVert P-\Id\rVert_{\mathfrak{B}(C(E))}<1\right\} \to \mathfrak{B}(C(E))
    \end{align*}
    by \begin{align}
        \log(P)\coloneqq \sum_{k=1}^\infty \frac{(-1)^{k-1}}{k}(P-\Id)^k.
        \label{oplogdef}
    \end{align} In a similar vein, we define $\exp\colon \mathfrak{B}(C(E)) \to \mathfrak{B}(C(E))$ by
    \begin{align*}
        \exp(P)\coloneqq\sum_{k=0}^\infty \frac{P^k}{k!}.
    \end{align*} 
    Note that $\exp$ as defined is continuous on $\mathfrak{B}(C(E))$.
    We will show that \begin{enumerate}
        \item \label{firststat} $\exp(\log M)=M$ for all $M$ in the domain of $\dom(\log)$, the domain of $\log$, and 
        \item \label{secondstat} There exists $\eps>0$ such that for all $P_1,P_2 \in \mathfrak{B}(C(E))$ that commute with each other, further satisfying $\lVert P_j-\Id\rVert_{\mathfrak{B}(C(E))}<\eps$, $j=1,2$, we have \begin{align*}
            \log\mathopen{}\left(P_1P_2\right)\mathclose{}=\log\mathopen{}\left(P_1\right)\mathclose{}+\log\mathopen{}\left(P_2\right)\mathclose{}.
        \end{align*}
    \end{enumerate}
    We first note that $\log$ as defined is continuous, since for any $r \in (0,1)$, the series defining $\log$ converges uniformly on \begin{align*}
        \left\{P \in \mathfrak{B}(C(E)): \lVert P-\Id\rVert_{\mathfrak{B}(C(E))}\le r\right\}.
    \end{align*}
    \begin{proof}[Proof of \eqref{firststat}]
         We will show that for any $M \in \mathfrak{B}(C(E))$ such that $\lVert M\rVert_{\mathfrak{B}(C(E))}<1$, we have \begin{align*}
        \exp(\log(\Id+M))=\Id+M.
    \end{align*}Let  $M \in \mathfrak{B}(C(E))$ be such that $\lVert M\rVert_{\mathfrak{B}(C(E))}<1$. First note that \begin{align*}
        \sum_{k=1}^n\frac{(-1)^{k-1}M^k}{k} \to \log(\Id+M)
    \end{align*} in $\mathfrak{B}(C(E))$ as $n \to \infty$. Now, consider the sequence of holomorphic functions $\left\{f_n\right\}_{n=1}^\infty$ on the open disk $\mathbb{D}\coloneqq \left\{z \in \mathbb{C}:|z|<1\right\}$ defined by \begin{align*}
        f_n(z)=\exp\left(\sum_{k=1}^n\frac{(-1)^{k-1}z^k}{k}\right),
    \end{align*} so that $f_n(M) \to \exp(\log (\Id+M))$ in $\mathfrak{B}(C(E))$. 
    The idea is to show $f_n(z) \to 1+z$ in a suitable sense. Observe from the Cauchy Derivative formula (CDF), that for any $r \in (0,1)$, we have \begin{align*}
        \frac{f_n^{(m)}(0)}{m!}=\frac1{2\pi i}\int_{|z|=r}\frac{f_n(z)}{z^{m+1}}\textup{d}z.
    \end{align*} Consequently,  \begin{align*}
        \left|\frac{f_n^{(m)}(0)}{m!}\right|&=\frac1{2\pi}\left|\int_{|z|=r}\frac{\exp\left(\sum_{k=1}^n\frac{(-1)^{k-1}z^k}{k}\right)}{z^{m+1}}\textup{d}z\right|\le \frac1{2\pi r^{m}}\int_{0}^{2\pi}\exp\mathopen{}\left(\sum_{k=1}^n\frac{r^{k}}{k}\right)\mathclose{}\textup{d}t\le \frac1{r^m(1-r)}.
    \end{align*} 
     Moreover, for each $m\in \mathbb{N}\cup \{0\}$, from the same CDF, we may deduce \begin{align*}
        \lim_{n \to \infty}\frac{f_n^{(m)}(0)}{m!}=\begin{cases}
            1 & \text{ if }m \in \{0,1\}, \\
            0 & \text{ otherwise. }
        \end{cases}
    \end{align*} Using the holomorphicity of the functions $f_n$, this means for each $n$, we can write \begin{align*}
        f_n(M)=\sum_{k=0}^\infty c(k,n)M^k,
    \end{align*} where \begin{equation*}
        |c(k,n)|\le \frac1{r^k(1-r)}
    \end{equation*} uniformly in $n$, and \begin{align*}
          \lim_{n \to \infty}c(n,k)=\begin{cases}
            1 & \text{ if }k \in \{0,1\}, \\
            0 & \text{ otherwise. }
        \end{cases} 
    \end{align*} Taking $r \in (0,1)$ such that $\lVert M \rVert_{\mathfrak{B}(C(E))}<r$, we may conclude that $\lim_{n \to \infty}f_n(M)=\Id+M$ in $\mathfrak{B}(C(E))$,  so \begin{align*}
        \exp(\log (\Id+M))=\Id+M,
    \end{align*} as desired. 
    \end{proof}
We now prove statement \ref{secondstat}:
\begin{proof}[Proof of statement \ref{secondstat}]
   From statement \ref{firststat} we may deduce $\log$ is an open map, since it implies \begin{align*}
    \lVert M\rVert_{\mathfrak{B}(C(E))}=\lVert \exp(\log M)\rVert_{\mathfrak{B}(C(E))}\le \lVert \exp\rVert_{\mathfrak{B}(C(E))\to \mathfrak{B}(C(E))}\lVert \log M\rVert_{\mathfrak{B}(C(E))}\le e\lVert \log M\rVert_{\mathfrak{B}(C(E))}.
\end{align*} To that end, let $\eps>0$ be small enough so that the open ball of radius $\eps$ is contained in the image of $\log$, so \begin{align}
    \left\{M \in \mathfrak{B}(C(E)): \lVert M\rVert_{\mathfrak{B}(C(E))}<\eps\right\} \subset \im \log.
    \label{jasa}
\end{align} Let $\delta \in (0,1)$ be small, and let $P_1,P_2 \in \mathfrak{B}(C(E))$ be such that $P_1P_2=P_2P_1$, and \begin{align}
    \lVert P_j-\Id \rVert_{\mathfrak{B}(C(E))}<\delta\quad  \forall j \in\{1,2\}.
    \label{mergui}
\end{align} From the definition of $\log$, this implies $\log(P_1)$ commutes with $\log\mathopen{}\left(P_2\right)\mathclose{}$, so \begin{align*}
    \exp\mathopen{}\left(\log\mathopen{}\left(P_1\right)\mathclose{}+\log\mathopen{}\left(P_2\right)\mathclose{}\right)=\exp(\log\mathopen{}\left(P_1\right)\mathclose{})\exp(\log\mathopen{}\left(P_2\right)\mathclose{})=P_1P_2=\exp\mathopen{}\left(\log(P_1P_2)\right)\mathclose{},
\end{align*} ($\exp(A)\exp(B)=\exp(A+B)$ whenever $A$ and $B$ commute) \begin{equation}
\exp\mathopen{}\left(\log(P_1P_2)\right)\mathclose{}= \exp\mathopen{}\left(\log\mathopen{}\left(P_1\right)\mathclose{}+\log\mathopen{}\left(P_2\right)\mathclose{}\right).
\label{webr}
\end{equation}
Now, \eqref{mergui} implies $\lVert \log\mathopen{}\left(P_j\right)\mathclose{}\rVert_{\mathfrak{B}(C(E))}\le -\log(1-\delta)$ for each $j=1,2$, so \begin{align*}
    \lVert \log\mathopen{}\left(P_1\right)\mathclose{}+\log\mathopen{}\left(P_2\right)\mathclose{}\rVert_{\mathfrak{B}(C(E))}<-2\log(1-\delta).
\end{align*} By choosing $\delta$ small enough, we can make $-2\log(1-\delta)<\eps$, which would imply $\log\mathopen{}\left(P_1\right)\mathclose{}+\log\mathopen{}\left(P_2\right)\mathclose{} \in \im \log$, from \eqref{jasa}. In that case, we can take the $\log$ of both sides of \eqref{webr} to get \begin{align*}
    \log(P_1P_2)=\log\mathopen{}\left(P_1\right)\mathclose{}+\log\mathopen{}\left(P_2\right)\mathclose{},
\end{align*} as desired.
\end{proof}
We now prove Lemma \ref{nastyc}.
\begin{proof}
Since $t \mapsto Q_t$ is continuous on $\mathfrak{B}(C(E))$, let $w>0$ be small enough so that \begin{align*}
    \lVert Q_t-\Id\rVert_{\mathfrak{B}(C(E))}<1 \quad \forall t \in [0,w].
\end{align*}
Define $\widehat{Q}\colon [0,w] \to \mathfrak{B}(C(E))$ by \begin{align}
    \widehat{Q}(t)\coloneqq\log\mathopen{}\left(Q_t\right)\mathclose{}.
    \label{exponentiate}
\end{align} Then $\widehat{Q}$ is continuous, and since $Q_s$ commutes with $Q_t$ for all $t,s\ge 0$, the semigroup property $Q_{s+t}=Q_sQ_t$ implies \begin{align*}
    \widehat{Q}(s+t)=\widehat{Q}(s)+\widehat{Q}(t)\quad\forall t,s\in [0,w]\text{ such that }t+s\le w.
\end{align*} With this, we may deduce that $\widehat{Q}(t)=\frac{\widehat{Q}(w)}{w}t$: First we note that \begin{align*}
    \widehat{Q}\mathopen{}\left(\frac{w}{m}\right)\mathclose{}=\frac1{m}\widehat{Q}(w),
\end{align*} for any $m\in \mathbb{N}$.
Pick any integers $m,n\ge 0$ with $0< n\le m$. Then \begin{align*}
    n\widehat{Q}\mathopen{}\left(\frac{w}{m}\right)\mathclose{}=\widehat{Q}\mathopen{}\left(\frac{nw}{m}\right)\mathclose{},
\end{align*} and so \begin{align*}
    \widehat{Q}\mathopen{}\left(\frac{n}{m}\cdot w\right)\mathclose{}=\frac{n}{m}\widehat{Q}(w),
\end{align*} for all integers $0<n\le m$. Consequently,
\begin{align*}
    \widehat{Q}\mathopen{}\left(ws\right)=\widehat{Q}\mathopen{}\left(w\right)\mathclose{}s \quad \forall s \in \mathbb{Q}\cap [0,1].\end{align*} Since $\widehat{Q}$ is continuous, this implies $  \widehat{Q}(t)=\frac{\widehat{Q}(w)}{w}t$ for all $t\in [0,t_0]$. Exponentiating both sides of \eqref{exponentiate}, applying statement \eqref{firststat}, then implies  \begin{align*}
        Q_t=\exp\mathopen{}\left(\frac{\widehat{Q}(w)}{w}t\right)\mathclose{} \quad \forall t \in [0,w].
    \end{align*} We can then use the semigroup property $Q_{s+t}=Q_sQ_t$ to conclude $Q_t=\exp\mathopen{}\left(\frac{\widehat{Q}(w)}{w}t\right)\mathclose{}$ for all $t\ge 0$, as desired.
\end{proof}
Since $E$ is metrizable, let $\rho\colon E\times E \to [0,\infty)$ be a metric that induces the topology on $E$. We introduce a truncated version of $\rho$, $\widetilde{\rho}\colon E\times E \to[0,1]$ given by \begin{equation}
\widetilde{\rho}=\min(1,\rho).
    \label{widehatrhodef}
\end{equation}
Note that $\widetilde{\rho}$ is continuous on $E\times E$.
With this, we have the following theorem.
\begin{theorem}
    For each $T>0$, there is a constant $M_{T}>0$ depending on $T$ such that for all  $0\le s\le t$, such that $t-s\le T$, \begin{align*}
        \mathbb{E}\left[\widetilde{\rho}\mathopen{}\left(B_t,B_s\right)\mathclose{}\right]\le M_T(t-s).
    \end{align*}
    \label{euphoria}
\end{theorem}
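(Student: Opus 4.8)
The plan is to peel off the starting point of the process, reducing the claim to an estimate on the transition kernel that is uniform in its base point, and then to exploit the operator-norm smoothness of $\left(Q_t\right)_{t\ge0}$ supplied by Lemma \ref{nastyc}.

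\emph{Step 1: reduction via \eqref{porcelain}.} Since $\widetilde{\rho}$ is bounded and continuous on $E\times E$ (and symmetric), formula \eqref{porcelain} with $k=2$, $t_1=s$, $t_2=t$ and $\varphi=\widetilde{\rho}$ gives
\begin{align*}
\mathbb{E}\left[\widetilde{\rho}(B_t,B_s)\right]=\int_E\gamma(\textup{d}x_0)\int_EQ_s(x_0,\textup{d}x_1)\,g_{t-s}(x_1),\qquad g_h(x):=\int_EQ_h(x,\textup{d}y)\,\widetilde{\rho}(x,y).
\end{align*}
As $\widetilde{\rho}\ge0$ we have $g_h\ge0$, and since $\gamma$ and each $Q_s(x_0,\cdot)$ are probability measures, it follows that $\mathbb{E}[\widetilde{\rho}(B_t,B_s)]\le\sup_{x\in E}g_{t-s}(x)$. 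Hence it suffices to produce $M_T>0$ with $\sup_{x\in E}g_h(x)\le M_Th$ for every $h\in[0,T]$.

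\emph{Step 2: turning $g_h$ into an increment of the semigroup.} Fix $x\in E$ and put $f_x:=\widetilde{\rho}(x,\cdot)$. Then $f_x\in C(E)$, $\lVert f_x\rVert_{C(E)}\le1$, and---this is the point---$f_x(x)=\widetilde{\rho}(x,x)=0$. Therefore $g_h(x)=Q_hf_x(x)=\bigl(Q_hf_x-f_x\bigr)(x)$, so
\begin{align*}
0\le g_h(x)\le\lVert Q_hf_x-f_x\rVert_{C(E)}\le\lVert Q_h-\Id\rVert_{\mathfrak{B}(C(E))}\lVert f_x\rVert_{C(E)}\le\lVert Q_h-\Id\rVert_{\mathfrak{B}(C(E))}.
\end{align*}

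\emph{Step 3: Lipschitz control of $h\mapsto Q_h$ at the origin.} By Lemma \ref{nastyc}, $Q_h=\exp(Ah)$ for some bounded operator $A$ on $C(E)$. Since $h\mapsto\exp(Ah)$ is differentiable in $\mathfrak{B}(C(E))$ with derivative $A\exp(Ah)$, the fundamental theorem of calculus yields $Q_h-\Id=\int_0^hA\exp(Au)\,\textup{d}u$, and for $0\le h\le T$,
\begin{align*}
\lVert Q_h-\Id\rVert_{\mathfrak{B}(C(E))}\le h\,\lVert A\rVert_{\mathfrak{B}(C(E))}\sup_{0\le u\le T}\lVert\exp(Au)\rVert_{\mathfrak{B}(C(E))}\le h\,\lVert A\rVert_{\mathfrak{B}(C(E))}\,e^{T\lVert A\rVert_{\mathfrak{B}(C(E))}}.
\end{align*}
Setting $M_T:=1+\lVert A\rVert_{\mathfrak{B}(C(E))}\,e^{T\lVert A\rVert_{\mathfrak{B}(C(E))}}>0$ and combining Steps 1--3 gives $\mathbb{E}[\widetilde{\rho}(B_t,B_s)]\le M_T(t-s)$, as required.

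I do not expect a serious obstacle: once the reformulation is in hand the estimate is soft. The one place to be careful is the legitimacy of Step 1---specifically that $x_1\mapsto g_h(x_1)$ is $\mathcal{E}$-measurable, which is part of the standard machinery of transition kernels and is already built into the validity of \eqref{porcelain}; alternatively, Step 1 can be carried out by bounding the full iterated integral of \eqref{porcelain} directly, using only that the integrand is nonnegative and, after integrating out the last variable, at most $M_T(t-s)$ everywhere. It is worth stressing that the \emph{linear} (rather than merely $o(1)$) rate in $h$ genuinely uses the operator-norm continuity hypothesis through Lemma \ref{nastyc}: bare Feller continuity would only give $\lVert Q_hf_x-f_x\rVert_{C(E)}\to0$ with no rate.
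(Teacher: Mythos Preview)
Your argument is correct and follows essentially the same route as the paper: apply \eqref{porcelain}, use $\widetilde{\rho}(x,x)=0$ to rewrite the inner integral as a semigroup increment $(Q_{t-s}f_x-f_x)(x)$, and then invoke Lemma \ref{nastyc} together with the identity $Q_h-\Id=\int_0^h A\exp(Au)\,\textup{d}u$ to obtain the linear-in-$h$ bound. The only cosmetic differences are that you bound $\sup_x g_h(x)$ first and then use that the outer integrals are probability measures, whereas the paper carries the integrals through and uses $\lVert\widetilde{\rho}\rVert_{C(E\times E)}$ in place of your $\lVert f_x\rVert_{C(E)}\le 1$.
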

\begin{proof}
Note
    Let $\gamma$ be the distribution of $B_0$, so that from the formula in \eqref{porcelain}, \begin{align*}
        \mathbb{E}\left[\widetilde{\rho}\mathopen{}\left(B_t,B_s\right)\mathclose{}\right]=\int_E\gamma(\textup{d}x_0)\int_EQ_s(x_0,\textup{d}x_1)\int_EQ_{t-s}(x_1,\textup{d}x_2)\widetilde{\rho}(x_1,x_2),
    \end{align*} and since $\widetilde{\rho}(x_1,x_1)=0$, we can write this as \begin{align}
\mathbb{E}\left[\widetilde{\rho}\mathopen{}\left(B_t,B_s\right)\mathclose{}\right]=\int_E\gamma(\textup{d}x_0)\int_EQ_s(x_0,\textup{d}x_1)\left(Q_{t-s}\widetilde{\rho}(x_1,\cdot)(x_1)-\widetilde{\rho}(x_1,\cdot)(x_1)\right). 
\label{rosalia}
    \end{align} Here $\widetilde{\rho}(x_1,\cdot)\colon E \to [0,\infty)$ is such that $\widetilde{\rho}(x_1,\cdot)(x)=\widetilde{\rho}(x_1,x)$. From Lemma \ref{nastyc}, we know we can write $Q_t=\exp(At)$ for some $A \in \mathfrak{B}(C(E))$. Consequently, for any function $f \in C(E)$, we can write \begin{align*}
        Q_hf(x)-f(x)=\int_0^hA\exp\mathopen{}\left(At\right)\mathclose{}f(x)\textup{d}t,
    \end{align*} by expanding the series for $\exp$, hence \begin{align*}
        \lVert Q_hf-f\rVert_{C(E)}\le  h\cdot \lVert A \rVert_{\mathfrak{B}(C(E))}\exp\mathopen{}\left(\lVert A \rVert_{\mathfrak{B}(C(E))}h\right)\mathclose{}\lVert f\rVert_{C(E)}.
    \end{align*} This implies for any $x_1 \in E$, \begin{align*}
        \left|Q_{t-s}\widetilde{\rho}(x_1,\cdot)(x_1)-\widetilde{\rho}(x_1,\cdot)(x_1)\right|\le (t-s)\lVert A \rVert_{\mathfrak{B}(C(E))}\exp\mathopen{}\left(\lVert A \rVert_{\mathfrak{B}(C(E))}(t-s)\right)\mathclose{}\lVert \widetilde{\rho} \rVert_{C(E\times E)},
    \end{align*} since $\widetilde{\rho}$ is bounded. Plugging this into \eqref{rosalia}, using the fact that $0\le t-s\le T$, we get \begin{align*}
        \mathbb{E}\left[\widetilde{\rho}(B_t,B_s)\right]&\le (t-s)\int_E\gamma(\textup{d}x_0)\int_E Q_s(x_0,\textup{d}x_1)\lVert A \rVert_{\mathfrak{B}(C(E))}\exp\mathopen{}\left(\lVert A \rVert_{\mathfrak{B}(C(E))}T\right)\mathclose{}\lVert \widetilde{\rho} \rVert_{C(E\times E)}\\
        &\le \lVert A \rVert_{\mathfrak{B}(C(E))}\exp\mathopen{}\left(\lVert A \rVert_{\mathfrak{B}(C(E))}T\right)\mathclose{}\lVert \widetilde{\rho} \rVert_{C(E\times E)}(t-s),
    \end{align*} as desired.
\end{proof}

\noindent
\newline

 For each element $\omega \in \Omega$ and partition  $\pi=\left\{a=\pi_0<\pi_1<\cdots<\pi_{k-1}<\pi_k=b\right\}$ of $[a,b]$, let $\LV\mathopen{}\left(\omega,\pi\right)\mathclose{}$ be the $\widetilde{\rho}$-variation of $\omega$ over $\pi$, given by \begin{align*}
    \LV(\omega,\pi)\coloneqq \sum_{j=1}^k \widetilde{\rho}\mathopen{}\left(B_{\pi_j}\mathopen{}\left(\omega\right),B_{\pi_{j-1}}\mathopen{}\left(\omega\right)\mathclose{}\mathclose{}\right)\mathclose{}.
\end{align*} Note that Theorem \ref{euphoria} implies the following lemma: \begin{lemma}
Let $\tau$ be any partition of the interval $[s,t]$ with $\mesh(\tau)\le 1$. Then there is a constant $K>0$ such that \begin{align*}
    \int_{\Omega}\LV\mathopen{}\left(\omega,\tau\right)\mathbb{P}(\textup{d}\omega)\le K(t-s).
\end{align*}
    \label{goddid}
\end{lemma}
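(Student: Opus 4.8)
The plan is to deduce this immediately from Theorem \ref{euphoria} together with linearity of the integral and a telescoping argument. Write the partition as $\tau=\{s=\pi_0<\pi_1<\cdots<\pi_{k-1}<\pi_k=t\}$. Since $\LV(\omega,\tau)=\sum_{j=1}^k\widetilde{\rho}(B_{\pi_j}(\omega),B_{\pi_{j-1}}(\omega))$ is a finite sum of nonnegative measurable functions, Tonelli's theorem lets me interchange the sum and the integral over $\Omega$, so that
\begin{align*}
    \int_\Omega \LV(\omega,\tau)\,\P(\textup{d}\omega)=\sum_{j=1}^k \E\left[\widetilde{\rho}\mathopen{}\left(B_{\pi_j},B_{\pi_{j-1}}\right)\mathclose{}\right].
\end{align*}

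Next I would apply Theorem \ref{euphoria} with $T=1$: since $\mesh(\tau)\le 1$ we have $0\le \pi_j-\pi_{j-1}\le 1$ for every $j$, so there is a constant $M_1>0$ (independent of the partition and of $j$) with $\E[\widetilde{\rho}(B_{\pi_j},B_{\pi_{j-1}})]\le M_1(\pi_j-\pi_{j-1})$. Summing over $j$ and telescoping $\sum_{j=1}^k(\pi_j-\pi_{j-1})=t-s$ gives
\begin{align*}
    \int_\Omega \LV(\omega,\tau)\,\P(\textup{d}\omega)\le M_1\sum_{j=1}^k(\pi_j-\pi_{j-1})=M_1(t-s),
\end{align*}
so the claim holds with $K=M_1$, and in particular $K$ can be taken to depend only on the semigroup (through $\lVert A\rVert_{\mathfrak{B}(C(E))}$ and $\lVert\widetilde{\rho}\rVert_{C(E\times E)}$) and not on $s$, $t$, or the choice of $\tau$.

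There is essentially no obstacle here — this is a direct corollary of Theorem \ref{euphoria} — but the one point worth stating carefully is why the mesh condition $\mesh(\tau)\le 1$ is exactly what is needed: Theorem \ref{euphoria} only gives a linear-in-$(t-s)$ bound with a constant $M_T$ that is allowed to blow up as $T\to\infty$, so to get a single constant $K$ valid for all partitions of all intervals $[s,t]$ one must control each increment by a fixed $T$, and $T=1$ is available precisely because every subinterval of $\tau$ has length at most $\mesh(\tau)\le 1$. I would also note explicitly that the interchange of sum and integral is justified by nonnegativity of $\widetilde{\rho}$ (Tonelli), so no integrability hypothesis is required beyond what Theorem \ref{euphoria} already provides.
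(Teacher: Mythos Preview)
Your proof is correct and matches the paper's approach: the paper simply states that Theorem \ref{euphoria} implies Lemma \ref{goddid} without spelling out the details, and your telescoping argument with $T=1$ is exactly the intended derivation. One cosmetic remark: invoking Tonelli for a \emph{finite} sum is unnecessary---linearity of expectation already does the job.
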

We are now going to define a c\`{a}dl\`{a}g modification $\left(\widetilde{B}_t\right)_{t\ge 0}$ of $\left(B_t\right)_{t\ge 0}$ as follows.\newline

\noindent

\section{Definition of \texorpdfstring{$\left(\widetilde{B}_t\right)_{t\ge 0}$}{Lg}}
To define $\left(\widetilde{B}_{t\ge 0}\right)$ we will need some lemmas. First we introduce a `canonical' sequence refined partitions that cover $\mathbb{Q}$:
\begin{definition}
For each $T \in \mathbb{Q}^+$, let $\left\{\tau^T_{k}\right\}_{k=1}^\infty$ be a sequence of refined rational partitions of $[0,T]$ with $\mesh\mathopen{}\left(\tau_k^T\right)\mathclose{}\le 1$. So for each $k$, $\tau^T_{k} \subset \tau^T_{k+1}$, each element of $\tau^T_k$ is rational, and \begin{align*}
    \bigcup_{k=1}^\infty \tau^T_k=[0,T]\cap \mathbb{Q}.
\end{align*} For example, we can take the elements of $\tau_k^T$ to be \begin{align*}
    \tau_k^T=\left(\{T\}\cup \left\{\frac{p}{q}:p,q\in \mathbb{N}\cup \{0\}, \ \gcd(p,q)=1, \  \text{ and } q\le k\right\}\right)\cap [0,T].
\end{align*}
    \label{canonicalseqpartitionts}
\end{definition}
The next lemma is as follows:
\begin{lemma}
 For each $T\ge 0$ , let $\left\{\tau_k^T\right\}_{k=1}^\infty$ be the partitions defined in Definition \ref{canonicalseqpartitionts}. Set\begin{align}
    \mathcal{S}_T\coloneqq \left\{\omega \in \Omega: \lim_{k \to \infty}\LV\mathopen{}\left(\omega,\tau_k^T\right)\mathclose{}=\infty\right\}.
    \label{mathcalSdef}
\end{align}Note if $T_1<T_2$, then $\mathcal{S}_{T_1}\subset \mathcal{S}_{T_2}$. The set $\mathcal{S}_T$ is $\mathcal{F}_T$-measurable, and $\mathbb{P}\mathopen{}\left(\mathcal{S}_T\right)\mathclose{}=0$.
    \label{canon}
\end{lemma}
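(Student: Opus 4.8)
The plan is to exploit two features of the family $\{\tau_k^T\}_{k\ge 1}$: it is \emph{refining}, and each $\tau_k^T$ is a partition of $[0,T]$ of mesh at most $1$. First I would note that $\widetilde{\rho}=\min(1,\rho)$ is again a metric on $E$, since the truncation preserves symmetry, positivity, and the triangle inequality. Then, because $\tau_k^T\subset\tau_{k+1}^T$, subdividing any subinterval $[\pi_{j-1},\pi_j]$ of $\tau_k^T$ according to $\tau_{k+1}^T$ and applying the triangle inequality for $\widetilde{\rho}$ along the subdivision gives $\LV(\omega,\tau_k^T)\le\LV(\omega,\tau_{k+1}^T)$ for every $\omega\in\Omega$. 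Hence $k\mapsto\LV(\omega,\tau_k^T)$ is nondecreasing, so the limit $L_T(\omega):=\lim_{k\to\infty}\LV(\omega,\tau_k^T)=\sup_k\LV(\omega,\tau_k^T)$ exists in $[0,\infty]$, and $\mathcal{S}_T=\{\omega:L_T(\omega)=\infty\}=\bigcap_{N\ge 1}\bigcup_{k\ge 1}\{\omega:\LV(\omega,\tau_k^T)>N\}$.

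For the measurability claim: every node $\pi_j$ of $\tau_k^T$ lies in $[0,T]$, so by adaptedness $B_{\pi_j}$ is $\mathcal{F}_{\pi_j}$-measurable and hence $\mathcal{F}_T$-measurable; since $\widetilde{\rho}$ is continuous (and $E$ is separable metrizable, so that $\mathcal{B}(E\times E)=\mathcal{B}(E)\otimes\mathcal{B}(E)$), each summand $\widetilde{\rho}(B_{\pi_j},B_{\pi_{j-1}})$ is $\mathcal{F}_T$-measurable, whence $\LV(\,\cdot\,,\tau_k^T)$ is $\mathcal{F}_T$-measurable. Taking the countable union and intersection displayed above then shows $\mathcal{S}_T\in\mathcal{F}_T$.

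To show $\mathbb{P}(\mathcal{S}_T)=0$: applying Lemma \ref{goddid} to each $\tau_k^T$ (all of which are partitions of $[0,T]$ with mesh $\le 1$) furnishes a constant $K$, not depending on $k$, with $\int_\Omega\LV(\omega,\tau_k^T)\,\mathbb{P}(\textup{d}\omega)\le KT$ for every $k$. Since $\LV(\,\cdot\,,\tau_k^T)\uparrow L_T$, the Monotone Convergence Theorem yields $\int_\Omega L_T\,\mathbb{P}(\textup{d}\omega)=\lim_{k\to\infty}\int_\Omega\LV(\omega,\tau_k^T)\,\mathbb{P}(\textup{d}\omega)\le KT<\infty$, so $L_T<\infty$ $\mathbb{P}$-almost surely, i.e.\ $\mathbb{P}(\mathcal{S}_T)=0$. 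For the monotonicity remark, if $T_1<T_2$ are rational then for $k$ large enough that $T_1\in\tau_k^{T_2}$ one has $\tau_k^{T_1}\subset\tau_k^{T_2}$, so $L_{T_1}\le L_{T_2}$ and therefore $\mathcal{S}_{T_1}\subset\mathcal{S}_{T_2}$.

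None of this is really an obstacle; the one point deserving care is that the $L^1$-bound coming from Lemma \ref{goddid} is uniform in $k$ (it sees only the fixed interval $[0,T]$), which is precisely what allows the Monotone Convergence Theorem to conclude that the limit $L_T$ is integrable, hence almost surely finite. The remaining ingredients — monotonicity of $\LV$ along refinements via the triangle inequality, and $\mathcal{F}_T$-measurability via adaptedness — are routine.
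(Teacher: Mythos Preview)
Your proposal is correct and follows essentially the same route as the paper: monotonicity of $\LV(\,\cdot\,,\tau_k^T)$ along refinements, $\mathcal{F}_T$-measurability from adaptedness and continuity of $\widetilde{\rho}$, and then the uniform $L^1$ bound from Lemma~\ref{goddid} to force $\mathbb{P}(\mathcal{S}_T)=0$. The only cosmetic difference is that the paper invokes Fatou's lemma on $\mathcal{S}_T$ to reach the contradiction $\infty\cdot\mathbb{P}(\mathcal{S}_T)\le KT$, whereas you use the Monotone Convergence Theorem to get $\int L_T\le KT<\infty$ directly; since the sequence is monotone these are interchangeable here.
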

\begin{proof}
Since the partitions $\tau_k^T$ are getting finer, the functions $\omega \mapsto \LV\mathopen{}\left(\omega,\tau_k^T\right)\mathclose{}$ form a non-decreasing sequence (in $k$), so the limit exists (or is infinite). So it suffices to show for each partition $\tau_k^T$, the function $\omega \mapsto \LV\mathopen{}\left(\omega,\tau_k^T\right)\mathclose{}$ is $\mathcal{F}_T$-measurable. We can write $\tau_k^T$ as \begin{align*}
        \tau_{k}^T=\left\{0=\tau_{k,0}^T<\tau_{k,1}^T<\cdots<\tau_{k,m-1}^T<\tau_{k,m}^T=T\right\},
    \end{align*} and so \begin{align*}
        \LV\bigl(\omega,\tau_k^T\bigr)=\sum_{j=1}^{m}\widetilde{\rho}\mathopen{}\left(B_{\tau_{k,j}^T}(\omega),B_{\tau_{k,j-1}^T}(\omega)\right)\mathclose{}.
    \end{align*} Since each $\tau_{k,j}^T\le T$ and $\widetilde{\rho}$ is continuous, this shows $\omega\mapsto \LV\mathopen{}\left(\omega,\tau_k^T\right)\mathclose{}$ is $\mathcal{F}_T$-measurable. Consequently, the function $\omega \mapsto \lim_{k \to \infty}\LV\mathopen{}\left(\omega,\tau_k^T\right)\mathclose{}$ is $\mathcal{F}_T$-measurable, and so $\mathcal{S}_T$ as defined is $\mathcal{F}_T$-measurable.

    Since each $\tau_k^T$ is a partition of $[0,T]$ with $\mesh\mathopen{}\left(\tau_k^T\right)\le 1$, from Lemma \ref{goddid}, we have \begin{align*}
        \int_{\Omega}\LV\mathopen{}\left(\omega,\tau_k^T\right)\mathclose{}\mathbb{P}(\textup{d}\omega)\le KT,
    \end{align*} so \begin{align*}
        \int_{\mathcal{S}_T}\LV\mathopen{}\left(\omega,\tau_k^T\right)\mathopen{}\mathbb{P}(\textup{d}\omega)\le KT.
    \end{align*} Combining this with Fatou's lemma, we get \begin{align*}
        KT\ge \liminf_{k \to \infty}\int_{\mathcal{S}_T}\LV\mathopen{}\left(\omega,\tau_k^T\right)\mathopen{}\mathbb{P}(\textup{d}\omega)\ge  \int_{\mathcal{S}_T}\liminf_{k \to \infty}\LV\mathopen{}\left(\omega,\tau_k^T\right)\mathopen{}\mathbb{P}(\textup{d}\omega)=\int_{\mathcal{S}_T}\infty \ \mathbb{P}(\textup{d}\omega),
    \end{align*} so \begin{align*}
        \infty\cdot \mathbb{P}\left(\mathcal{S}_T\right)\le KT,
    \end{align*}which implies $\mathbb{P}\left(\mathcal{S}_T\right)=0$, as desired.
\end{proof}
The second lemma is as follows:
\begin{lemma}
   For each $t\ge 0$, the set \begin{align}
       \mathcal{C}_t&\coloneqq  \left\{\omega \in \Omega: \lim_{\substack{s \to t \\ s \in \mathbb{Q}}}B_s(\omega)=B_t(\omega)\right\}
       \label{mathcalCtdef},
    \end{align} is $\mathcal{F}_t^+$-measurable, and $\mathbb{P}\bigl( \mathcal{C}_t\bigr)=1$.
\label{Yanni}
\end{lemma}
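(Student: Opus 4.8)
\emph{Overview and measurability.} The plan is to treat the two claims separately: first the $\mathcal{F}_t^+$-measurability of $\mathcal{C}_t$ (routine bookkeeping), then $\mathbb{P}(\mathcal{C}_t)=1$, which I would deduce by combining the finite-variation estimate of Lemma \ref{canon} (itself resting on Theorem \ref{euphoria} via Lemma \ref{goddid}) with the stochastic-continuity estimate of Theorem \ref{euphoria}. For measurability, first rewrite $\mathcal{C}_t$ in $\varepsilon$--$\delta$ form along rationals:
\[ \mathcal{C}_t = \bigcap_{n\ge 1}\bigcup_{m\ge 1} A_{n,m}, \qquad A_{n,m} \coloneqq \bigcap_{\substack{s\in\mathbb{Q},\ 0<|s-t|<1/m,\ s\ge 0}}\bigl\{\omega\colon \widetilde{\rho}\bigl(B_s(\omega),B_t(\omega)\bigr)\le 1/n\bigr\}. \]
Each set $\{\widetilde\rho(B_s,B_t)\le 1/n\}$ is measurable since $\widetilde\rho$ is continuous, hence Borel, on $E\times E$ (which, $E$ being second countable, carries the product $\sigma$-algebra $\mathcal{E}\otimes\mathcal{E}$), and $(B_s,B_t)$ is measurable. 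The key observation for the stronger, $\mathcal{F}_t^+$-measurability is that $A_{n,m}$ is non-decreasing in $m$, because the indexing set of rationals shrinks; hence for any fixed $u>t$ one has $\bigcup_{m\ge 1}A_{n,m}=\bigcup_{m> 1/(u-t)}A_{n,m}$, and in the latter union every index $s$ satisfies $0\le s<u$, so $B_s,B_t$ are $\mathcal{F}_u$-measurable by adaptedness. Thus $\bigcup_m A_{n,m}\in\mathcal{F}_u$, so $\mathcal{C}_t\in\mathcal{F}_u$; since $u>t$ was arbitrary, $\mathcal{C}_t\in\bigcap_{u>t}\mathcal{F}_u=\mathcal{F}_t^+$.

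\emph{Almost-sure existence of one-sided rational limits.} Put $\Omega_0\coloneqq\Omega\setminus\bigcup_{T\in\mathbb{Q}^+}\mathcal{S}_T$, which has full probability by Lemma \ref{canon}. Since the partitions $\tau_k^T$ of Definition \ref{canonicalseqpartitionts} are refining and exhaust $\mathbb{Q}\cap[0,T]$, and the $\widetilde\rho$-variation of a path is non-decreasing under refinement (triangle inequality for the metric $\widetilde\rho=\min(1,\rho)$), for $\omega\in\Omega_0$ the quantity $\sup_k\LV(\omega,\tau_k^T)$ is finite and dominates the $\widetilde\rho$-variation of $s\mapsto B_s(\omega)$ over every finite subset of $\mathbb{Q}\cap[0,T]$. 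Because $E$ is Polish I take $\rho$ to be a complete metric, so $\widetilde\rho$ is also complete. Then for any rationals $s_1<s_2<\cdots\uparrow t$ (with $t\le T\in\mathbb{Q}^+$) the bound gives $\sum_j\widetilde\rho\bigl(B_{s_{j+1}}(\omega),B_{s_j}(\omega)\bigr)<\infty$, so $(B_{s_j}(\omega))_j$ is $\widetilde\rho$-Cauchy and converges; interleaving two such sequences shows the limit does not depend on the choice. Hence, for every $\omega\in\Omega_0$, the limits $B_{t-}(\omega)\coloneqq\lim_{s\uparrow t,\,s\in\mathbb{Q}}B_s(\omega)$ (for $t>0$) and $B_{t+}(\omega)\coloneqq\lim_{s\downarrow t,\,s\in\mathbb{Q}}B_s(\omega)$ (for $t\ge 0$) exist in $E$; note $\widetilde\rho$-convergence forces $\rho$-convergence since $\widetilde\rho=\rho$ on small scales.

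\emph{Identification with $B_t$ and conclusion.} By Theorem \ref{euphoria}, $\mathbb{E}[\widetilde\rho(B_s,B_t)]\le M_T|s-t|\to 0$ as $s\to t$, so $\widetilde\rho(B_s,B_t)\to 0$ in probability. Fix rationals $s_n\downarrow t$: on $\Omega_0$ one has $B_{s_n}\to B_{t+}$ in $\widetilde\rho$ almost surely, hence in probability, while also $B_{s_n}\to B_t$ in probability; by uniqueness of limits in probability and the triangle inequality, $\widetilde\rho(B_{t+},B_t)=0$ almost surely, i.e. $B_{t+}=B_t$ a.s. The identical argument with $s_n\uparrow t$ gives $B_{t-}=B_t$ a.s. when $t>0$. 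Therefore, outside a $\mathbb{P}$-null set, $B_{t-}(\omega)=B_{t+}(\omega)=B_t(\omega)$, which is precisely the statement that the two-sided limit $\lim_{s\to t,\,s\in\mathbb{Q}}B_s(\omega)$ exists and equals $B_t(\omega)$; that is, $\omega\in\mathcal{C}_t$. Hence $\mathbb{P}(\mathcal{C}_t)=1$. (For $t=0$ only the right-limit part of the argument is needed.)

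\emph{Main obstacle.} The genuinely non-routine step is the probability-one claim: one must upgrade the merely-in-probability continuity $\widetilde\rho(B_s,B_t)\to 0$ to an almost-sure statement, and this is exactly what the finite-$\widetilde\rho$-variation input of Lemma \ref{canon} buys, by forcing the one-sided rational limits to exist almost surely; once they exist, the in-probability estimate of Theorem \ref{euphoria} pins them down to $B_t$. A secondary point to keep straight throughout is the $t=0$ endpoint, where only a one-sided limit is in play, and the interplay between $\rho$ and its truncation $\widetilde\rho$ (harmless, but worth stating explicitly).
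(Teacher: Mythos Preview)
Your measurability argument is essentially the same as the paper's. For the probability-one claim, however, you take a genuinely different route. The paper argues directly on the complement: for each $k$ it builds refining rational partitions $\pi_j$ of $[t-\varepsilon,t+\varepsilon]$ that include the point $t$ itself, uses Lemma~\ref{goddid} to bound $\int_\Omega \LV(\omega,\pi_j)\,\mathbb{P}(\textup{d}\omega)\le 2K\varepsilon$, observes that any $\omega$ in the bad set $\{\limsup_{q\to t,\,q\in\mathbb{Q}}\widetilde\rho(B_q,B_t)>1/k\}$ eventually has $\LV(\omega,\pi_j)>1/k$, applies Fatou, and lets $\varepsilon\to 0$. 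You instead first establish that the one-sided rational limits $B_{t\pm}$ exist on a set of full measure---this is exactly the content of Lemma~\ref{lemmatouse}, which the paper proves \emph{after} the present lemma---and then identify those limits with $B_t$ by combining the almost-sure convergence $B_{s_n}\to B_{t\pm}$ with the $L^1$ (hence in-probability) convergence $\widetilde\rho(B_{s_n},B_t)\to 0$ coming from Theorem~\ref{euphoria}. Your route is more modular: it cleanly separates existence of limits from their identification, and effectively reuses Lemma~\ref{lemmatouse} rather than running a parallel Fatou argument with $t$ inserted into the partitions. The paper's route has the virtue of being self-contained at this point in the exposition, not forward-referencing a later lemma. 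Both arguments are correct; the only small thing to watch in yours is that you are implicitly choosing $\rho$ to be a complete metric (the paper does the same in Lemma~\ref{lemmatouse}), which is legitimate since a locally compact $\sigma$-compact metrizable space is Polish.
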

% \begin{remark}
%     Note that $\mathcal{C}_t \subset \mathfrak{C}_t$, and if $t \in \mathbb{Q}^+$, then $\mathfrak{C}_t=\mathcal{C}_t$.
% \end{remark}
\begin{proof}[Proof of Lemma \ref{Yanni}]
    We start by showing $\mathcal{C}_t$ is $\mathcal{F}_t^+$-measurable. Using the $\eps$-$\delta$ definition of a limit,  \begin{align*}
         \lim_{\substack{s \to t \\ s \in \mathbb{Q}}}B_s(\omega)=B_t(\omega)
    \end{align*} if and only if for each $\eps>0$, there exists a $\delta>0$ such that for all $q \in \mathbb{Q}$, $|q-t|<\delta$ implies $\rho\mathopen{}\left(B_t(\omega),B_q(\omega)\right)\mathclose{}<\eps$. So \begin{align*}
        \mathcal{C}_t&=\bigcap_{\eps>0}\left(\bigcup_{\delta>0} \left\{\omega \in \Omega:\rho\mathopen{}\left(B_q(\omega),B_t(\omega)\right)\mathclose{}<\eps \quad \forall q \in \mathbb{Q}^+\cap \left(t-\delta,t+\delta\right)\right\}\right).
    \end{align*} For each $N \in \mathbb{N}$, we can write this as \begin{align*}
        \mathcal{C}_t&=\bigcap_{k=N}^\infty\left(\bigcup_{m=N}^\infty \left\{\omega \in \Omega:\rho\mathopen{}\left(B_q(\omega),B_t(\omega)\right)\mathclose{}<\frac1{k} \quad \forall q \in \mathbb{Q}^+\cap \left(t-\frac1{m},t+\frac1{m}\right)\right\}\right) \\
        &=\bigcap_{k=N}^\infty\left(\bigcup_{m=N}^\infty \left(\bigcap_{q \in \mathbb{Q}^+\cap \left(t-\frac1{m},t+\frac1{m}\right)}\left\{\omega \in \Omega:\widetilde{\rho}\mathopen{}\left(B_q(\omega),B_t(\omega)\right)\mathclose{}<\frac1{k} \right\}\right)\right),
    \end{align*}since from the definition of $\widetilde{\rho}$ in \eqref{widehatrhodef}, $\rho\le 1\iff \widetilde{\rho}=\rho$. $\widetilde{\rho}$ is continuous, so for each $q \in \mathbb{Q}^+\cap \left(t-\frac1{m},t+\frac1{m}\right)$, $k \in \mathbb{N}$, the set \begin{align*}
        \left\{\omega \in \Omega:\widetilde{\rho}\mathopen{}\left(B_q(\omega),B_t(\omega)\right)\mathclose{}<\frac1{k} \right\}
    \end{align*} is $\mathcal{F}_{t+\frac1{N}}$-measurable ($m\ge N$), and since $\mathbb{Q}^+$ is countable,  this implies $\mathcal{C}_t \in \mathcal{F}_{t+\frac1{N}}$, for each $N \in \mathbb{N}$. So $\mathcal{C}_t \in \mathcal{F}_t^+$. \newline

    We will now show $\mathbb{P}\bigl(\mathcal{C}_t\bigr)=1$. To do this it is easier to work with the complement. We can write it as \begin{align*}
        \Omega\setminus\mathcal{C}_t&=\bigcup_{k=1}^\infty \left\{\omega \in\Omega: \limsup_{\substack{q \to t \\ q \in \mathbb{Q}}}\rho\mathopen{}\left(B_q(\omega),B_t(\omega)\right)\mathclose{}>\frac1{k}\right\}.
    \end{align*} Observe that $\rho(x_1,x_2)>\frac1{k} \iff \widetilde{\rho}(x_1,x_2)>\frac1{k}$, so
    \begin{align}
        \Omega\setminus\mathcal{C}_t&=\bigcup_{k=1}^\infty \left\{\omega \in\Omega: \limsup_{\substack{q \to t \\ q \in \mathbb{Q}}}\widetilde{\rho}\mathopen{}\left(B_q(\omega),B_t(\omega)\right)\mathclose{}>\frac1{k}\right\}.
        \label{fave}
    \end{align}
   We start by noting that the set \begin{align*}
       \left\{\omega \in \Omega: \limsup_{\substack{q \to t \\ q \in \mathbb{Q}}}\widetilde{\rho}\mathopen{}\left(B_q(\omega),B_t(\omega)\right)\mathclose{}>\frac1{k}\right\}
   \end{align*} is $\mathcal{F}_\infty$-measurable, since we \begin{align*}
       &\left\{\omega \in \Omega: \limsup_{\substack{q \to t \\ q \in \mathbb{Q}}}\widetilde{\rho}\mathopen{}\left(B_q(\omega),B_t(\omega)\right)\mathclose{}>\frac1{k}\right\} \\
      &= \bigcap_{m=1}^\infty \left\{\omega \in \Omega:\exists q \in \mathbb{Q}\cap  \left(t-\frac1{m},t+\frac1{m}\right) \text{ such that } \widetilde{\rho}\mathopen{}\left(B_q(\omega),B_t(\omega)\right)\mathclose{}>\frac1{k}\right\} \\
      &=\bigcap_{m=1}^\infty\left(\bigcup_{q \in\mathbb{Q}\cap  \left(t-\frac1{m},t+\frac1{m}\right) }\left\{\omega \in \Omega: \widetilde{\rho}\mathopen{}\left(B_q(\omega),B_t(\omega)\right)>\frac1{k}\right\}\right),
   \end{align*} and for each $q \in \mathbb{Q}$, the set $\left\{\omega \in \Omega: \widetilde{\rho}\mathopen{}\left(B_q(\omega),B_t(\omega)\right)>\frac1{k}\right\}$ is $\mathcal{F}_\infty$-measurable. We will now show the set defined in \eqref{fave} is of measure $0$.

    For each $\eps \in \left(0,\frac1{2}\right)$ (with $t-\eps$ rational), let $\left\{\pi_j\right\}_{j=1}^\infty$ be a sequence of refined partitions (so $\pi_{j} \subset \pi_{j+1}$ for each $j$) of $\left[t-\eps,t+\eps\right]$ with $t$ included in each partition $\pi_j$.  So we can write \begin{align}
        \pi_j=\left\{\pi_{j,0}=t-\eps<\pi_{j,1}<\pi_{j,2}<\cdots<\pi_{j,h}=t<\cdots <\pi_{j,m-1}<\pi_{j,m}=t+\eps\right\}.
    \end{align} Moreover, let each point of $\pi_j$  be in $\mathbb{Q}^+\cup \{t\}$, and let $\bigcup_{j=1}^\infty \pi_j=\left(\mathbb{Q}^+\cup \{t\}\right)\cap [t-\eps,t+\eps]$. Since each $\pi_j$ is a partition of $[t-\eps,t+\eps]$, from Lemma \ref{goddid}, \begin{align*}
        \int_{\Omega}\LV\mathopen{}\left(\omega,\pi_j\right)\mathclose{}\mathbb{P}(\textup{d}\omega)\le 2K\eps,
    \end{align*} so \begin{align}
        \int_{\left\{\omega \in \Omega: \limsup_{\substack{q \to t \\ q \in \mathbb{Q}}}\widetilde{\rho}\mathopen{}\left(B_q\mathopen{}\left(\omega\right)\mathclose{},B_t\mathopen{}\left(\omega\right)\mathclose{}\right)\mathclose{}>\frac1{k}\right\}}\LV\mathopen{}\left(\omega,\pi_j\right)\mathclose{}\mathbb{P}(\textup{d}\omega)\le 2K\eps.
        \label{through}
    \end{align} 
    Let $\omega \in \Omega\setminus \mathcal{C}_t$ be such that \begin{align}
        \limsup_{\substack{q \to t \\ q \in \mathbb{Q}}}\widetilde{\rho}\mathopen{}\left(B_q\mathopen{}\left(\omega\right)\mathclose{},B_t\mathopen{}\left(\omega\right)\mathclose{}\right)\mathclose{}>\frac1{k}.
        \label{aleph9}
    \end{align} This implies for each $\eps>0$ we can find  $q_\eps \in \mathbb{Q}^+\cap (t-\eps,t+\eps)$ such that $\widetilde{\rho}\mathopen{}\left(B_{q_\eps}\mathopen{}\left(\omega\right)\mathclose{},B_t\mathopen{}\left(\omega\right)\mathclose{}\right)\mathclose{}>\frac1{k}$. Consider the partition (formed by) $\left\{q_\eps,t\right\}$. Since the partitions $\left\{\pi_j\right\}_{j=1}^\infty$ are getting finer and eventually they cover every point of $\left(\mathbb{Q}^+\cup \{t\}\right)\cap \left[t-\eps,t+\eps\right]$, for large enough $j$, $\left\{q_\eps,t\right\} \subset \pi_j$. So then \begin{align*}
        \liminf_{j \to \infty}\LV\mathopen{}\left(\omega,\pi_j\right)\mathclose{}\ge \LV\mathopen{}\left(\omega, \left\{q_\eps,t\right\}\right)\mathclose{}=\widetilde{\rho}\mathopen{}\left(B_{q_\eps}\mathopen{}\left(\omega\right)\mathclose{},B_t\mathopen{}\left(\omega\right)\mathclose{}\right)\mathclose{}> \frac1{k}.
    \end{align*} 
    This holds for any $\omega \in \Omega\setminus \mathcal{C}_t$ satisfying \eqref{aleph9}. Applying Fatou's Lemma, we get \begin{align*}
           & \liminf_{j \to \infty}\int_{\left\{\omega \in \Omega: \limsup_{\substack{q \to t \\ q \in \mathbb{Q}}}\widetilde{\rho}\mathopen{}\left(B_q\mathopen{}\left(\omega\right)\mathclose{},B_t\mathopen{}\left(\omega\right)\mathclose{}\right)\mathclose{}>\frac1{k}\right\}}\LV\mathopen{}\left(\omega,\pi_j\right)\mathclose{}\mathbb{P}(\textup{d}\omega) \\
           &\ge \int_{\left\{\omega \in \Omega: \limsup_{\substack{q \to t \\ q \in \mathbb{Q}}}\widetilde{\rho}\mathopen{}\left(B_q(\omega),B_t(\omega)\right)\mathclose{}>\frac1{k}\right\}}\liminf_{j \to \infty}\LV\mathopen{}\left(\omega,\pi_j\right)\mathbb{P}(\textup{d}\omega) \\
           &\ge \frac1{k}\int_{\left\{\omega \in \Omega: \limsup_{\substack{q \to t \\ q \in \mathbb{Q}}}\widetilde{\rho}\mathopen{}\left(B_q(\omega),B_t(\omega)\right)\mathclose{}>\frac1{k}\right\}}\mathbb{P}(\textup{d}\omega),
    \end{align*} and combining this with \eqref{through}, we get
    \begin{align*}
      2K\eps\ge \frac1{k}\mathbb{P}\mathopen{}\left(\left\{\omega \in \Omega: \limsup_{\substack{q \to t \\ q \in \mathbb{Q}}}\rho\mathopen{}\left(B_q(\omega),B_t(\omega)\right)\mathclose{}>\frac1{k}\right\}\right)\mathclose{},
    \end{align*}  for each $\eps>0$. This implies \begin{align*}
        \mathbb{P}\mathopen{}\left(\left\{\omega \in \Omega: \limsup_{\substack{q \to t \\ q \in \mathbb{Q}}}\widetilde{\rho}\mathopen{}\left(B_q(\omega),B_t(\omega)\right)\mathclose{}>\frac1{k}\right\}\right)\mathclose{}=0
    \end{align*} for each $k \in \mathbb{N}$. Combining this with \eqref{fave}, we get $\mathbb{P}\mathopen{}\left(\Omega\setminus \mathcal{C}_t\right)\mathclose{}=0$, as desired.
\end{proof}
Our final lemma is as follows.
\begin{lemma}
Let $\omega \in\Omega\setminus \bigcup_{T \in \mathbb{N}}\mathcal{S}_T$, where $\mathcal{S}_T$ is as defined in \eqref{mathcalSdef}. Then for each $t \in [0,\infty)$, the left and right-sided rational limits \begin{align*}
    \lim_{\substack{s \to t^- \\ s \in \mathbb{Q}^+}}B_s(\omega), \ \ \lim_{\substack{s \to t^+ \\ s \in \mathbb{Q^+}}}B_s(\omega)
\end{align*} exist.
    \label{lemmatouse}
\end{lemma}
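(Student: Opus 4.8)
The plan is to turn the hypothesis ``$\omega\notin\bigcup_{T\in\mathbb N}\mathcal S_T$'' into the statement that the path $s\mapsto B_s(\omega)$, restricted to $\mathbb Q^+\cap[0,T]$, has finite $\widetilde\rho$-variation for every integer $T$, and then to run the classical argument that a finite-variation map into a complete metric space, defined on a dense subset of an interval, has one-sided limits at every point. Both halves need to be made precise.

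First, fix $t\ge 0$ and choose an integer $T>t$. Since $\omega\notin\mathcal S_T$ and, as observed in the proof of Lemma \ref{canon}, $k\mapsto\LV(\omega,\tau_k^T)$ is non-decreasing, the quantity $V:=\sup_k\LV(\omega,\tau_k^T)$ is finite. I claim $\LV(\omega,\sigma)\le V$ for \emph{every} finite partition $\sigma=\{0=\sigma_0<\cdots<\sigma_m=T\}$ of $[0,T]$ with rational nodes. This follows from two observations: (i) refining a partition never decreases its $\widetilde\rho$-variation, since inserting a node $c$ between consecutive nodes $a<b$ replaces $\widetilde\rho(B_b(\omega),B_a(\omega))$ by $\widetilde\rho(B_c(\omega),B_a(\omega))+\widetilde\rho(B_b(\omega),B_c(\omega))$, which is no smaller by the triangle inequality for $\widetilde\rho$; and (ii) since the $\tau_k^T$ increase to $\mathbb Q\cap[0,T]\supseteq\sigma$, we have $\sigma\subseteq\tau_k^T$ for all large $k$, hence $\LV(\omega,\sigma)\le\LV(\omega,\tau_k^T)\le V$.

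Next I verify existence of the right limit; the left one is identical and is vacuous when $t=0$. Because $E$ is Polish we may take $\rho$ complete, so that $\widetilde\rho=\min(1,\rho)$ is complete as well (a $\widetilde\rho$-Cauchy sequence is eventually $\rho$-Cauchy); it therefore suffices to check the Cauchy criterion for $s\to t^+$ along $\mathbb Q^+$. Suppose it fails: there is $\eps\in(0,1)$ such that for every $\delta>0$ there are $s,s'\in\mathbb Q^+\cap(t,t+\delta)$ with $\widetilde\rho(B_s(\omega),B_{s'}(\omega))>\eps$. Applying this with $\delta$'s shrinking to $0$ — first any $\delta_1$ with $t+\delta_1<T$, then inductively $\delta_{n+1}=(a_n-t)/2$ — produces rationals $t<\cdots<a_{n+1}<b_{n+1}<a_n<b_n<\cdots<a_1<b_1<T$ with $\widetilde\rho(B_{a_n}(\omega),B_{b_n}(\omega))>\eps$ for all $n$. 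For each $n$, the set $\sigma_n:=\{0,a_n,b_n,a_{n-1},b_{n-1},\ldots,a_1,b_1,T\}$ is a finite rational partition of $[0,T]$ in which $a_j$ is immediately followed by $b_j$, so $\LV(\omega,\sigma_n)\ge\sum_{j=1}^n\widetilde\rho(B_{a_j}(\omega),B_{b_j}(\omega))>n\eps$, contradicting $\LV(\omega,\sigma_n)\le V$ once $n>V/\eps$. Hence the Cauchy criterion holds and $\lim_{s\to t^+,\,s\in\mathbb Q^+}B_s(\omega)$ exists. For the left limit one runs the same construction with pairs $0<a_1<b_1<\cdots<a_n<b_n<t$ accumulating at $t$ from below (choosing the $\delta$'s small enough that $t-\delta>0$).

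I do not expect a serious obstacle: the core is the standard ``bounded variation implies one-sided limits'' argument. The two points requiring care are the reduction in the second paragraph — passing from the specific canonical partitions $\tau_k^T$ to a uniform bound over \emph{all} finite rational partitions, which is exactly where the refinement-monotonicity of $\LV$ enters — and the appeal to completeness of $(E,\widetilde\rho)$ in the third paragraph: a Cauchy sequence need not converge without a complete metric, so it is essential that $E$ be Polish and that $\rho$ be chosen accordingly (then $\widetilde\rho$ inherits completeness).
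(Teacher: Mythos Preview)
Your proof is correct and follows essentially the same route as the paper's: both argue that failure of the one-sided Cauchy criterion along rationals would produce finite rational partitions of $[0,T]$ with arbitrarily large $\widetilde\rho$-variation, contradicting $\omega\notin\mathcal S_T$ via the refinement-monotonicity of $\LV$ and the fact that any finite rational set is eventually contained in $\tau_k^T$. Your version is slightly tidier in first isolating the uniform bound $V=\sup_k\LV(\omega,\tau_k^T)$ and in being explicit that one needs $\rho$ complete (the paper simply asserts ``since $E$ is complete''), but the substance is the same.
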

\begin{proof}
    Let $t \in [0,\infty)$. We tackle the left-sided limit first. 

    Suppose for the sake of contradiction the limit $\lim_{\substack{s \to t^- \\ s \in \mathbb{Q}^+}}B_s(\omega)$ does not exist. Since $E$ is complete, this is equivalent to being Cauchy, so the limit exists if and only if for every $\eps \in (0,1)$, there exists a $\delta>0$ such that $\rho\mathopen{}\left(B_{q_1}(\omega),B_{q_2}(\omega)\right)\mathclose{}\le \eps$ for all $q_1,q_2 \in (t-\delta,t)$ (and since $\eps \in (0,1)$ we can replace $\rho$ with $\widetilde{\rho}$ in the previous statement). So if the limit does not exist, we can find an $\eps \in (0,1)$ such that for all $\delta>0$, there exists $q_1,q_2 \in (t-\delta,t)$ such that $\widetilde{\rho}\mathopen{}\left(B_{q_1}(\omega),B_{q_2}(\omega)\right)\mathclose{}>\eps$. If this happens, we can create an increasing sequence $q_1<q_2<q_3<...$ such that $q_j \to t^{-}$, and for each odd $j$, $\widetilde{\rho}\mathopen{}\left(B_{q_j}(\omega),B_{q_{j+1}}(\omega)\right)\mathclose{}>\eps$. Hence
 \begin{equation}
   \begin{split}
        &\left\{\omega \in \Omega: \lim_{\substack{s \to t^- \\ s \in \mathbb{Q}^+}}B_s(\omega)\text{ does not exist}\right\} \\
        &=\bigcup_{m=1}^\infty \left\{\omega \in \Omega:  \exists \text{ an increasing sequence } \left\{q_j\right\}_{j=1}^\infty \to t^{-}, \ \widetilde{\rho}\mathopen{}\left(B_{q_j}(\omega),B_{q_{j+1}}(\omega)\right)\mathclose{}>\frac1{m} \ \forall \text{ odd }j\right\}.
   \end{split}
        \label{ponishyou}
    \end{equation} Suppose $\omega \in \Omega$ is such that there exists an increasing sequence of rationals $q_1<q_2<q_3<\cdots$ such that $q_j \to t^-$, and for each odd $j$, $\widetilde{\rho}\mathopen{}\left(B_{q_j}(\omega),B_{q_{j+1}}(\omega)\right)\mathclose{}>\frac1{m}$. This then implies for each $N \in \mathbb{N}$, \begin{align*}
        \LV\mathopen{}\left(\omega, \left\{q_j\right\}_{j=1}^N\right)=\sum_{j=1}^{N-1}\widetilde{\rho}\mathopen{}\left(B_{q_j}(\omega),B_{q_{j+1}}(\omega)\right)\mathclose{}\ge\frac1{m}\left\lceil \frac{N-1}{2}\right\rceil
    \end{align*} Let $h \in \mathbb{N}$ be such that $h\ge t$. Since the points $\left\{q_j\right\}_{j=1}^\infty$ are rational and less than $t$, for large enough $k$, $\left\{q_j\right\}_{j=1}^N \subset \tau_k^h$, where the partition $\tau_k^h$ is as defined in Definition \ref{canonicalseqpartitionts}. Consequently, \begin{align*}
        \liminf_{k \to \infty}\LV\mathopen{}\left(\omega,\tau_k^h\right)\ge  \LV\mathopen{}\left(\omega, \left\{q_j\right\}_{j=1}^N\right)\ge\frac1{m}\left\lceil \frac{N-1}{2}\right\rceil
    \end{align*} for each $N \in \mathbb{N}$, so $\lim_{k \to \infty}\LV\mathopen{}\left(\omega,\tau_k^h\right)=\infty$. This implies \begin{align*}
        \left\{\omega \in \Omega:  \exists \text{ an increasing sequence } \left\{q_j\right\}_{j=1}^\infty \to t^{-}, \ \widetilde{\rho}\mathopen{}\left(B_{q_j}(\omega),B_{q_{j+1}}(\omega)\right)\mathclose{}>\frac1{m} \ \forall \text{ odd }j\right\} \subset \mathcal{S}_h,
    \end{align*} and so from \eqref{ponishyou}, \begin{align*}
        \left\{\omega \in \Omega: \lim_{\substack{s \to t^- \\ s \in \mathbb{Q}^+}}B_s(\omega)\text{ does not exist}\right\} \subset \mathcal{S}_h.
    \end{align*}
 This however is a contradiction, since by assumption $\omega \notin \bigcup_{T \in \mathbb{N}}\mathcal{S}_T$, so the limit \begin{align*}
        \lim_{\substack{s \to t^{-} \\ s\in \mathbb{Q}^+}}B_s(\omega)
    \end{align*} exists.
    \newline

Now for the right-sided limit. Suppose for the sake of contradiction the limit $\lim_{\substack{s \to t^+ \\ s \in \mathbb{Q}^+}}B_s(\omega)$ does not exist. Like with the left-sided limit, this means we can find an $\eps \in (0,1)$ such that for all $\delta>0$, there exists $q_1,q_2 \in (t,t+\delta)$ such that $\widetilde{\rho}\mathopen{}\left(B_{q_1}(\omega),B_{q_2}(\omega)\right)\mathclose{}>\eps$. If this happens, we can create a decreasing sequence $q_1>q_2>q_3>...$ such that $q_j \to t^{+}$, and for each odd $j$, $\widetilde{\rho}\mathopen{}\left(B_{q_j}(\omega),B_{q_{j+1}}(\omega)\right)\mathclose{}>\eps$. Hence
 \begin{equation}
   \begin{split}
        &\left\{\omega \in \Omega: \lim_{\substack{s \to t^+ \\ s \in \mathbb{Q}^+}}B_s(\omega)\text{ does not exist}\right\} \\
        &=\bigcup_{m=1}^\infty \left\{\omega \in \Omega:  \exists \text{ a decreasing sequence } \left\{q_j\right\}_{j=1}^\infty \to t^{+}, \ \widetilde{\rho}\mathopen{}\left(B_{q_j}(\omega),B_{q_{j+1}}(\omega)\right)\mathclose{}>\frac1{m} \ \forall \text{ odd }j\right\}.
   \end{split}
        \label{ponishyou2}
    \end{equation}
Suppose $\omega \in \Omega$ is such that there exists a decreasing sequence of rationals $q_1>q_2>q_3>\cdots$ such that $q_j \to t^+$, and for each odd $j$, $\widetilde{\rho}\mathopen{}\left(B_{q_j}(\omega),B_{q_{j+1}}(\omega)\right)\mathclose{}>\frac1{m}$. This then implies for each $N \in \mathbb{N}$, \begin{align*}
        \LV\mathopen{}\left(\omega, \left\{q_{N+1-j}\right\}_{j=1}^N\right)=\sum_{j=1}^{N-1}\widetilde{\rho}\mathopen{}\left(B_{q_{N+1-j}}(\omega),B_{q_{N-j}}(\omega)\right)\mathclose{}\ge \frac1{m}\left\lfloor \frac{N-1}{2}\right\rfloor.
    \end{align*} Let $h \in \mathbb{N}$ be such that $h\ge q_1$. Since the points $\left\{q_j\right\}_{j=1}^\infty$ are rational, for large enough $k$, $\left\{q_{N+1-j}\right\}_{j=1}^N \subset \tau_k^{h}$ (recall $\left\{q_{N+1-j}\right\}_{j=1}^N$ is decreasing), where the partition $\tau_k^{h}$ is as defined in Definition \ref{canonicalseqpartitionts}. Consequently, \begin{align*}
        \liminf_{k \to \infty}\LV\mathopen{}\left(\omega,\tau_k^h\right)\ge  \LV\mathopen{}\left(\omega, \left\{q_{N+1-j}\right\}_{j=1}^N\right)\ge \frac1{m}\left\lfloor \frac{N-1}{2}\right\rfloor
    \end{align*} for each $N \in \mathbb{N}$, so $\lim_{k \to \infty}\LV\mathopen{}\left(\omega,\tau_k^h\right)=\infty$. This implies \begin{align*}
        \left\{\omega \in \Omega:  \exists \text{ a decreasing sequence } \left\{q_j\right\}_{j=1}^\infty \to t^{+}, \ \widetilde{\rho}\mathopen{}\left(B_{q_j}(\omega),B_{q_{j+1}}(\omega)\right)\mathclose{}>\frac1{m} \ \forall \text{ odd }j\right\} \subset \mathcal{S}_h,
    \end{align*} and so from \eqref{ponishyou2}, \begin{align*}
        \left\{\omega \in \Omega: \lim_{\substack{s \to t^+ \\ s \in \mathbb{Q}^+}}B_s(\omega)\text{ does not exist}\right\} \subset \mathcal{S}_h.
    \end{align*}
 This however is a contradiction, since $\omega \notin \bigcup_{T \in \mathbb{N}}\mathcal{S}_T$, so the limit $\lim_{\substack{s \to t^{+} \\ s\in \mathbb{Q}^+}}B_s(\omega)$ exists.
This completes the proof of Lemma \ref{lemmatouse}.
\end{proof}

We are now ready to define $\left(\widetilde{B}_t\right)_{t\ge 0}$:
\subsection{Defining the process \texorpdfstring{$\left(\widetilde{B}_t\right)_{t\ge 0}$}{Lg}}\label{tildeBdef}
\noindent
\newline
For each $t>0$, we define $\widetilde{B}_t\colon \Omega \to E$ as follows.\newline
\noindent
\textbf{Case 1: $ \omega \in \bigcup_{T \in \mathbb{N}}\mathcal{S}_T$.}
In this case, pick any element $\widehat{e} \in E$ and set \begin{align*}
    \widetilde{B}_t(\omega)\equiv \widehat{e},
\end{align*} for all $t$.\newline

\noindent
\textbf{Case 2: $ \omega \notin \bigcup_{T \in \mathbb{N}}\mathcal{S}_T$.} From Lemma \ref{lemmatouse}, the limit $\lim_{\substack{s \to t^+ \\ s \in \mathbb{Q}^+}}B_s(\omega)$ exists, so we set \begin{align*}
    \widetilde{B}_t(\omega)=\lim_{\substack{s \to t^+ \\ s \in \mathbb{Q}^+}}B_s(\omega).
\end{align*}
\noindent
\newline

\noindent
\section{Modification of \texorpdfstring{$B$}{Lg} to c\`{a}dl\`{a}g paths}
We will now show that
\begin{proposition}
    The process $\left(\widetilde{B}_t\right)_{t\ge 0}$ is adapted to the filtration $\left(\widetilde{\mathcal{F}}_t\right)_{t\ge 0}$, and is a c\`{a}dl\`{a}g modification of $\left(B_t\right)_{t\ge 0}$ 
\label{levitating}
\end{proposition}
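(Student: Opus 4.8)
The plan is to verify three properties in turn: that $\widetilde{B}_t$ is $\widetilde{\mathcal{F}}_t$-measurable for every $t\ge0$, that $\widetilde{B}_t=B_t$ almost surely for every $t\ge0$, and that every sample path $t\mapsto\widetilde{B}_t(\omega)$ is c\`{a}dl\`{a}g. Throughout I would abbreviate $\mathcal{S}:=\bigcup_{T\in\mathbb{N}}\mathcal{S}_T$ with $\mathcal{S}_T$ as in \eqref{mathcalSdef}; the first thing to record is that, by Lemma \ref{canon}, each $\mathcal{S}_T$ is $\mathcal{F}_T\subset\mathcal{F}_\infty$-measurable with $\mathbb{P}(\mathcal{S}_T)=0$, so $\mathcal{S}\in\mathcal{F}_\infty$, $\mathbb{P}(\mathcal{S})=0$, and hence $\mathcal{S}\in\mathcal{N}$. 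This is the null set on which $\widetilde{B}$ is set to the constant $\widehat{e}$ (Case 1), and keeping track of it is the only delicate point.

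For the modification property I would invoke Lemma \ref{Yanni}. Fix $t\ge0$; on $\mathcal{C}_t\setminus\mathcal{S}$, which has probability $1$ since $\mathbb{P}(\mathcal{C}_t)=1$, the two-sided rational limit of $B_\cdot$ at $t$ exists and equals $B_t$, so in particular the right-sided rational limit exists and equals $B_t$ --- and that right-sided rational limit is exactly $\widetilde{B}_t$ on $\Omega\setminus\mathcal{S}$ by the Case 2 definition. Hence $\widetilde{B}_t=B_t$ off a null set.

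For adaptedness, fix $t\ge0$ and a single sequence of rationals $r_n\downarrow t$ with $r_n>t$; by Lemma \ref{lemmatouse}, $\widetilde{B}_t(\omega)=\lim_n B_{r_n}(\omega)$ for every $\omega\notin\mathcal{S}$. Let $A$ be the set where $(B_{r_n}(\omega))_n$ converges in $E$: it is $\sigma(\{B_{r_n}:n\})$-measurable, and since each $r_n$ can be taken below $t+\eps$ for any $\eps>0$, $A\in\mathcal{F}_{t+\eps}$ for every $\eps>0$, so $A\in\mathcal{F}_t^+$; on $A$ the limit $\widehat{Y}_t:=\lim_n B_{r_n}$ is $\mathcal{F}_t^+$-measurable, and I would extend it by $\widehat{e}$ off $A$ so that $\widehat{Y}_t$ is $\mathcal{F}_t^+$-measurable on all of $\Omega$. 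Since $\Omega\setminus\mathcal{S}\subset A$ and $\widetilde{B}_t=\widehat{Y}_t$ there, $\widetilde{B}_t$ agrees with the $\mathcal{F}_t^+$-measurable map $\widehat{Y}_t$ outside the null set $\mathcal{S}$; for any Borel $U$ the set $\widetilde{B}_t^{-1}(U)$ is $\mathcal{F}_\infty$-measurable and differs from $\widehat{Y}_t^{-1}(U)\in\mathcal{F}_t^+$ by a subset of $\mathcal{S}$, so Lemma \ref{coachella} (with $\Sigma=\mathcal{F}_t^+$) yields $\widetilde{B}_t^{-1}(U)\in\sigma(\mathcal{F}_t^+\cup\mathcal{N})=\widetilde{\mathcal{F}}_t$.

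For the c\`{a}dl\`{a}g property, fix $\omega\in\Omega$. If $\omega\in\mathcal{S}$ the path is the constant $\widehat{e}$. If $\omega\notin\mathcal{S}$, Lemma \ref{lemmatouse} guarantees that at every $v\ge0$ both one-sided rational limits of $B_\cdot(\omega)$ exist; write $g(v):=\widetilde{B}_v(\omega)$ for the right-sided one and $h(v)$ for the left-sided one. Right-continuity of $g$ at $t$: given $\eps\in(0,1)$, pick $\delta>0$ with $\widetilde{\rho}(B_s(\omega),g(t))<\eps$ for all $s\in\mathbb{Q}^+\cap(t,t+\delta)$; then for any real $u\in(t,t+\delta)$ the rationals immediately above $u$ still lie in $(t,t+\delta)$, so letting them tend to $u^+$ gives $\widetilde{\rho}(g(u),g(t))\le\eps$, whence $g(u)\to g(t)$ as $u\to t^+$. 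The same argument using $\mathbb{Q}^+\cap(t-\delta,t)$ shows $g(u)\to h(t)$ as $u\to t^-$, so the left limit exists; here I use that $\rho$ and $\widetilde{\rho}$ describe the same convergence for distances below $1$. The main obstacle I expect is precisely the measurability bookkeeping of the adaptedness step: $\mathcal{S}$ is null but only $\mathcal{F}_\infty$-measurable, not $\mathcal{F}_t^+$-measurable, which forces one to pass through the augmented $\sigma$-algebra via Lemma \ref{coachella} rather than conclude $\mathcal{F}_t^+$-measurability directly; the c\`{a}dl\`{a}g step is a routine real-versus-rational limit translation and the modification step is immediate.
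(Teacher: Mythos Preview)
Your modification and c\`{a}dl\`{a}g arguments are essentially those of the paper: both use Lemma~\ref{Yanni} together with $\mathbb{P}(\mathcal{S})=0$ for the modification, and both pass from rational one-sided limits to real one-sided limits by an $\eps$--$\delta$ squeeze for the c\`{a}dl\`{a}g part.

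For adaptedness you take a genuinely different route. The paper works directly with preimages: it fixes a compact $X\subset E$, decomposes $\widetilde{B}_t^{-1}(X)$ according to $\mathcal{S}$ versus $\Omega\setminus\mathcal{S}$, and rewrites the nontrivial piece as a countable combination of sets of the form $\{\dist(B_q(\cdot),X)<1/k\}$ with $q\in\mathbb{Q}\cap(t,t+1/m)$, landing in $\mathcal{F}_t^+$ up to a null set. You instead fix one rational sequence $r_n\downarrow t$, build an auxiliary $\mathcal{F}_t^+$-measurable variable $\widehat{Y}_t$ (limit of $B_{r_n}$ where it exists, $\widehat{e}$ elsewhere), observe $\widetilde{B}_t=\widehat{Y}_t$ off the null set $\mathcal{S}$, and then invoke Lemma~\ref{coachella}. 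Your approach is cleaner in that it avoids the compact-set and $\dist$ bookkeeping and makes transparent the role of the augmentation; the paper's approach has the small virtue of never naming Lemma~\ref{coachella} explicitly in this step.

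One wrinkle in your adaptedness step deserves a sharper sentence. You write that $A$ is $\sigma(\{B_{r_n}:n\})$-measurable ``and since each $r_n$ can be taken below $t+\eps$ for any $\eps>0$, $A\in\mathcal{F}_{t+\eps}$.'' But the sequence $(r_n)$ is fixed at the outset, so $r_1$ may well exceed $t+\eps$; as written this does not yield $A\in\mathcal{F}_{t+\eps}$. The correct (and easy) justification is that convergence of a sequence, and its limit, are tail properties: for every $N$ one has $A\in\sigma(\{B_{r_n}:n\ge N\})$ and $\widehat{Y}_t\mathbbm{1}_A$ is $\sigma(\{B_{r_n}:n\ge N\})$-measurable, and for any $\eps>0$ one can choose $N$ with $r_n<t+\eps$ for all $n\ge N$, giving $A\in\mathcal{F}_{t+\eps}$ and $\widehat{Y}_t$ $\mathcal{F}_{t+\eps}$-measurable. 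With this tail-event remark inserted, your argument is complete.
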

We do this in parts:
\begin{proof}[Proof that $\left(\widetilde{B}_t\right)_{t\ge 0}$ is adapted to $\left(\widetilde{\mathcal{F}}_t\right)_{t\ge 0}$]
Recall $\mathcal{E}$ is the Borel $\sigma$-algebra on $E$, and we assumed that $E$ is a metrizable locally compact topological space. We also assumed that $E$ is $\sigma$-compact. In particular $E$ is Hausdorff $\sigma$-compact. In that case, it turns out that $\mathcal{E}$ is generated by the compact sets in $E$, so
it suffices to show 
 $\widetilde{B}_t^{-1}(X) \in \widetilde{\mathcal{F}}_t$ for each compact set $X$.
 To that end, let $X \subset E$ be compact.
 We will show that $\widetilde{B}_t^{-1}(X) \in \widetilde{\mathcal{F}}_t$. By definition $ \widetilde{B}_t^{-1}(X)=\left\{\omega \in \Omega:\widetilde{B}_t(\omega) \in X\right\}$, and by considering the definition of $\widetilde{B}_t$ in Subsection \ref{tildeBdef}, we can write this as \begin{equation}
\begin{split}
    \widetilde{B}_t^{-1}(X)&=\left\{\omega \in \bigcup_{T \in \mathbb{N}}\mathcal{S}_T: \widetilde{B}_t(\omega) \in X\right\}\cup \left\{\omega \in \Omega\setminus \bigcup_{T \in \mathbb{N}}\mathcal{S}_T: \widetilde{B}_t(\omega) \in X\right\}  \\
    &=\left\{\omega \in \bigcup_{T \in \mathbb{N}}\mathcal{S}_T: \widetilde{B}_t(\omega) \in X\right\}\cup \left\{\omega \in \Omega\setminus \bigcup_{T \in \mathbb{N}}\mathcal{S}_T: \lim_{\substack{s \to t^+ \\ s \in \mathbb{Q}^+}}B_s(\omega) \in X\right\} .
\end{split}
    \label{BtinvA}
\end{equation} First note if $\omega \in \bigcup_{T \in \mathbb{N}}\mathcal{S}_T$, then $\widetilde{B}_t(\omega)=\widehat{e}$, so \begin{align*}
     \left\{\omega \in \bigcup_{T \in \mathbb{N}}\mathcal{S}_T: \widetilde{B}_t(\omega) \in X\right\}=\begin{cases}
         \bigcup_{T \in \mathbb{N}}\mathcal{S}_T & \text{ if }\widehat{e} \in X, \\
         \emptyset & \text{ otherwise.}
     \end{cases}
\end{align*}
 Note that $\bigcup_{T \in \mathbb{N}}\mathcal{S}_T$ is $\mathcal{F}_\infty$-measurable, and $\mathbb{P}\left(\bigcup_{T \in \mathbb{N}}\mathcal{S}_T\right)=0$ from Lemma \ref{canon}, so $\bigcup_{T \in \mathbb{N}}\mathcal{S}_T \in \sigma(\mathcal{N})$. Either way,  \begin{align}
\left\{\omega \in \bigcup_{T \in \mathbb{N}}\mathcal{S}_T: \widetilde{B}_t(\omega) \in X\right\} \in \sigma(\mathcal{N}).
    \label{weather}
\end{align} From Lemma \ref{lemmatouse}, we know if $\omega \in \Omega\setminus \bigcup_{T \in \mathbb{N}}\mathcal{S}_T$, then the limit $\lim_{\substack{s \to t^+ \\ s \in \mathbb{Q}^+}}B_s(\omega)$ exists. Moreover, we can note that given that the limit $\lim_{\substack{s \to t^+ \\ s \in \mathbb{Q}^+}}B_s(\omega)$ exists and $X$ is compact, this limit is in $X$ if and only if 
for any $\eps>0$, there exists some $\delta>0$ such that $\dist\mathopen{}\left(B_q(\omega),X\right)\mathclose{}<\eps$ for all $q \in \mathbb{Q}\cap (t,t+\delta)$, where \begin{align*}
    \dist\mathopen{}\left(s,X\right)\mathclose{}\coloneqq \inf_{x \in X}\rho(x,s).
\end{align*}
 In other words, \begin{equation}
\begin{split}
     &\left\{\omega \in \Omega\setminus \bigcup_{T \in \mathbb{N}}\mathcal{S}_T:\lim_{\substack{s \to t^+ \\ s \in \mathbb{Q}^+}}B_s(\omega) \in X\right\} \\
    &=\left(\Omega\setminus \bigcup_{T \in \mathbb{N}}\mathcal{S}_T\right)\cap \left\{\omega \in \Omega: \forall\eps>0, \ \exists \delta>0 \text{ such that } \forall q \in \mathbb{Q}\cap (t,t+\delta), \ \dist(B_q(\omega),X)<\eps\right\} \\
    &=\left(\Omega\setminus \bigcup_{T \in \mathbb{N}}\mathcal{S}_T\right)\cap \bigcap_{k=N}^\infty\left(\bigcup_{m=N}^\infty\bigcap_{q \in \mathbb{Q}\cap \left(t,t+\frac1{m}\right)}\left\{\omega \in \Omega: \dist\mathopen{}\left(B_q(\omega),X\right)\mathclose{}<\frac1{k}\right\}\right),
\end{split}
    \label{away}
\end{equation} for each $N \in \mathbb{N}$. Note $s \mapsto \dist(s,X)$ is continuous, so this implies for each $q$, the set $\left\{\omega \in \Omega: \dist\mathopen{}\left(B_q(\omega),X\right)\mathclose{}<\frac1{k}\right\}$ is $\mathcal{F}_q$-measurable, so \begin{align*}
    \bigcap_{k=N}^\infty\left(\bigcup_{m=N}^\infty\bigcap_{q \in \mathbb{Q}\cap \left(t,t+\frac1{m}\right)}\left\{\omega \in \Omega: \dist\mathopen{}\left(B_q(\omega),X\right)\mathclose{}<\frac1{k}\right\}\right) \in \mathcal{F}_{t+\frac1{N}} 
\end{align*} for each $N \in \mathbb{N}$. From \eqref{away}, this implies \begin{align*}
    \left\{\omega \in \Omega: \forall\eps>0, \ \exists \delta>0 \text{ such that } \forall q \in \mathbb{Q}\cap (t,t+\delta), \ \dist(B_q(\omega),X)<\eps\right\} \in \mathcal{F}_t^+,
\end{align*} and so \begin{align*}
    \left\{\omega \in \Omega\setminus \bigcup_{T \in \mathbb{N}}\mathcal{S}_T:\lim_{\substack{s \to t^+ \\ s \in \mathbb{Q}^+}}B_s(\omega) \in X\right\}=\sigma\mathopen{}\left(\mathcal{F}_t^+\cup \mathcal{N}\right)\mathclose{}=\widetilde{\mathcal{F}}_t.
    \end{align*}
Combining this result with \eqref{weather} and comparing to \eqref{BtinvA}, this shows $\widetilde{B}_t^{-1}(X) \in \widetilde{\mathcal{F}}_t$, for any compact set $X \subset E$. Hence $\widetilde{B}_t$ is $\widetilde{\mathcal{F}}_t$-measurable, and so $\left(\widetilde{B}_t\right)_{t\ge 0}$ is adapted to the filtration $\left(\widetilde{\mathcal{F}}_t\right)_{t\ge 0}$, as desired.   
\end{proof}
\noindent
\newline

We will now show $\left(\widetilde{B}_t\right)_{t\ge 0}$ is a modification of $\left(B_t\right)_{t\ge 0}$:
\begin{proof}[Proof that $\left(\widetilde{B}_t\right)_{t\ge 0}$ is a modification of $\left(B_t\right)_{t\ge 0}$]
Note since $\left(\widetilde{B}_t\right)_{t\ge 0}$ is adapted to $\left(\widetilde{F}_{t\in [0,\infty]}\right)$, the set $ \left\{\omega \in \Omega:B_t(\omega)=\widetilde{B}_t(\omega)\right\}$  is $\widetilde{\mathcal{F}}_t$-measurable. From Lemma \ref{Yanni}, if $\omega \in \mathcal{C}_t$, then \begin{align*}
    \lim_{\substack{s \to t \\ s\in \mathbb{Q}}}B_s(\omega)=B_t(\omega),
\end{align*} and from the definition of $\widetilde{B}_t$ in Subsection \ref{tildeBdef}, we know if $\omega \in \Omega\setminus \bigcup_{T \in \mathbb{N}}\mathcal{S}_T$, then \begin{align*}
    \widetilde{B}_t(\omega)=\lim_{\substack{s \to t^+ \\ s \in \mathbb{Q}^+}}B_s(\omega).
\end{align*} So if $\omega \in \mathcal{C}_t\cap \left(\Omega\setminus \bigcup_{T \in \mathbb{N}}\mathcal{S}_T\right)$, then $B_t(\omega)=\widetilde{B}_t(\omega)$. Hence \begin{align}
   \mathcal{C}_t\cap \left(\Omega\setminus \bigcup_{T \in \mathbb{N}}\mathcal{S}_T\right)\subset \left\{\omega \in \Omega:B_t(\omega)=\widetilde{B}_t(\omega)\right\}.
   \label{jcole}
\end{align} From Lemma \ref{canon}, we can deduce $\mathbb{P}\left(\bigcup_{T \in \mathbb{N}}\mathcal{S}_T\right)=0$, and from Lemma \ref{Yanni}, $\mathbb{P}\left(\mathcal{C}_t\right)=1$, so \begin{align*}
    \mathbb{P}\left(\mathcal{C}_t\cap \left(\Omega\setminus \bigcup_{T \in \mathbb{N}}\mathcal{S}_T\right)\right)=1.
\end{align*} This combined with \eqref{jcole} implies $\mathbb{P}\left( \left\{\omega \in \Omega:B_t(\omega)=\widetilde{B}_t(\omega)\right\}\right)=1$, as desired. 
\end{proof}
\noindent
\newline

Finally, we will now show that $\left(\widetilde{B}_t\right)_{t\ge 0}$ is c\`{a}dl\`{a}g:
\begin{proof}[Proof that $\left(\widetilde{B}_t\right)_{t\ge 0}$ is c\`{a}dl\`{a}g]
    First note from the definition of $\left(\widetilde{B}_{t\ge 0}\right)$ in Subsection \ref{tildeBdef} that if $\omega \in \bigcup_{T \in \mathbb{N}}\mathcal{S}_T$, then $t \mapsto \widetilde{B}_t(\omega)$ is constant, so it is certainly c\`{a}dl\`{a}g. Now we consider $\omega \notin \bigcup_{T \in \mathbb{N}}\mathcal{S}_T$.\newline

    Fix $\omega \in \Omega\setminus \bigcup_{T \in \mathbb{N}}\mathcal{S}_T$, and consider arbitrary $t_0 \in [0,\infty)$. We start by showing $t\mapsto \widetilde{B}_t(\omega)$ is right-continuous at $t_0$. We use the $\eps$-$\delta$ definition. Consider any $\eps>0$. From Lemma \ref{lemmatouse}, and the definition of $\widetilde{B}_t(\omega)$, we have \begin{align*}
       \widetilde{B}_{t_0}(\omega)= \lim_{\substack{s\to t_0^+ \\ s \in \mathbb{Q}^+}}B_s(\omega).
    \end{align*} To that end end, let $\delta>0$ be such that \begin{align}
        \rho\mathopen{}\left(B_q(\omega),\widetilde{B}_{t_0}(\omega)\right)\mathclose{}<\frac{\eps}{2} \quad \forall q \in \mathbb{Q}\cap \left(t_0,t_0+\delta\right).
        \label{jermaine}
    \end{align} Now let  $t \in \left(t_0,t_0+\delta\right)$ be arbitrary. Since $\lim_{\substack{s \to t^+ \\ s \in \mathbb{Q}}}B_s(\omega)=\widetilde{B}_t(\omega)$, there exists some  rational number $q_t \in \mathbb{Q}\cap \left(t,t_0+\delta\right)$ such that $\rho\mathopen{}\left(B_{q_t}(\omega), \widetilde{B}_t(\omega)\right)\mathclose{}<\frac{\eps}{2}$. Note $\left(t,t_0+\delta\right) \subset \left(t_0,t_0+\delta\right)$, so \eqref{jermaine} implies $\rho\mathopen{}\left(B_{q_t}(\omega),\widetilde{B}_{t_0}(\omega)\right)<\frac{\eps}{2}$, so by the triangle inequality, \begin{align*}
        \rho\left(\widetilde{B}_t(\omega),\widetilde{B}_{t_0}(\omega)\right)\le \rho\mathopen{}\left(\widetilde{B}_{t_0}(\omega),B_{q_t}(\omega)\right)+\rho\mathopen{}\left(B_{q_t}(\omega), \widetilde{B}_t(\omega)\right)\mathclose{}<\frac{\eps}{2}+\frac{\eps}{2}=\eps.
    \end{align*} $t \in \left(t_0,t_0+\delta\right)$ was arbitrary, so this shows that \begin{align*}
        \rho\left(\widetilde{B}_t(\omega),\widetilde{B}_{t_0}(\omega)\right)<\eps \quad \forall t \in \left(t_0,t_0+\delta\right).
    \end{align*} There exists a corresponding $\delta>0$ for each $\eps>0$, and so this shows that \begin{align*}
        \lim_{t \to t_0^+}\widetilde{B}_t(\omega)=\widetilde{B}_{t_0}(\omega).
    \end{align*}
\newline

    Now to show the left-limit exists. From Lemma \ref{lemmatouse}, we know the left-sided limit \begin{align*}
        \ell \coloneqq \lim_{\substack{s \to t_0^- \\ s \in \mathbb{Q}^+}}B_s(\omega)
    \end{align*} exists. To that end, we will show that $\lim_{t \to t_0^-}\widetilde{B}_t(\omega)=\ell$. We use the $\eps$-$\delta$ definition of continuity.
    Consider any $\eps>0$. We know there exists $\delta>0$ such that \begin{align}
       \rho\mathopen{}\left(B_q(\omega),\ell\right)\mathclose{}<\frac{\eps}{2} \quad \forall q \in \mathbb{Q}\cap \left(t_0-\delta,t_0\right).
       \label{pieces}
    \end{align} Now, let $t \in \left(t_0-\delta,t_0\right)$ be arbitrary. Since $\lim_{\substack{s \to t^{-} \\ s \in \mathbb{Q}}}B_s(\omega)=\widetilde{B}_t(\omega)$, there exists a rational $q_t \in \left(t_0-\delta,t\right)$ such that $\rho\mathopen{}\left(B_{q_t}(\omega), \widetilde{B}_t(\omega)\right)\mathclose{}<\frac{\eps}{2}$. Note $\left(t_0-\delta,t\right)\subset \left(t_0-\delta,t_0\right)$, so \eqref{pieces} implies $\rho\mathopen{}\left(B_{q_t}(\omega),\ell\right)<\frac{\eps}{2}$. So by the triangle inequality, \begin{align*}
        \rho\mathopen{}\left(\widetilde{B}_t(\omega),\ell\right)\mathclose{}\le \rho\mathopen{}\left(\ell,B_{q_t}(\omega)\right)\mathclose{}+\rho\mathopen{}\left(B_{q_t}(\omega),\widetilde{B}_t(\omega)\right)\mathclose{}<\frac{\eps}{2}+\frac{\eps}{2}=\eps.
    \end{align*} $t \in \left(t_0-\delta,t_0\right)$ was arbitrary, so this shows that \begin{align*}
        \rho\mathopen{}\left(\widetilde{B}_t(\omega),\ell\right)\mathclose{}<\eps \quad \forall t \in \left(t_0-\delta,t_0\right).
\end{align*} There exists a corresponding $\delta>0$ for each $\eps>0$, and so this shows that \begin{align*}
    \lim_{t \to t_0^-}\widetilde{B}_t(\omega)=\ell. 
\end{align*} So the left limit exists.

    This holds for all $t_0\ge 0$, and so $t\mapsto \widetilde{B}_t(\omega)$ is c\`{a}dl\`{a}g, as desired.
\end{proof}
These three steps complete the proof of Proposition \ref{levitating}.\newline

\noindent
We now show $\left(\widetilde{B}_{t}\right)_{t\ge 0}$ is a Markov process with semi-group $\left(Q_{t}\right)_{t\ge 0}$ and filtration $\left(\widetilde{\mathcal{F}}_t\right)_{t\ge 0}$:
\begin{proposition}
    For all $t,s\ge 0$, and bounded, continuous functions $f\colon E \to \mathbb{R}$, we have  \begin{enumerate}[i]
     \item \label{jets}$\mathbb{E}\left[f\mathopen{}\left(\widetilde{B}_{t+s}\right)\mathclose{}\Big\vert \widetilde{B}_{s}\right]=Q_{t}f\mathopen{}\left(\widetilde{B}_s\right)\mathclose{}$, and
     
     \item \label{jets2} $ \mathbb{E}\left[f\mathopen{}\left(\widetilde{B}_{t+s}\right)\mathclose{} \Big\lvert \widetilde{\mathcal{F}}_s\right]=Q_{t}f\mathopen{}\left(\widetilde{B}_s\right)\mathclose{}$.
 \end{enumerate}
\end{proposition}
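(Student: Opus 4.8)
The plan is to establish \eqref{jets2} first and then deduce \eqref{jets} from it. For \eqref{jets2} the route is: (a) upgrade the Markov property of $(B_t)_{t\ge0}$ from $(\mathcal F_t)$ to the right-continuous filtration $(\mathcal F_t^+)$; (b) enlarge $\mathcal F_s^+$ to $\widetilde{\mathcal F}_s=\sigma(\mathcal F_s^+\cup\mathcal N)$, observing via Lemma \ref{coachella} that adjoining null sets does not affect conditional expectations; (c) swap $B$ for its modification $\widetilde B$, which only alters things on a null set. Then \eqref{jets} follows by conditioning \eqref{jets2} further on $\widetilde B_s$.

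\emph{Step 1 (the substantive step).} Fix $s\ge0$, $t>0$ (when $t=0$ the identity is trivial since $Q_0f=f$ and $f(\widetilde B_s)$ is $\widetilde{\mathcal F}_s$-measurable) and $f\in C(E)$. I would choose $s_n=s+\frac1n\downarrow s$, note $\bigcap_n\mathcal F_{s_n}=\mathcal F_s^+$, and invoke the time-homogeneous Markov property for $(B_t)$ to write $\mathbb E[f(B_{s+t})\mid\mathcal F_{s_n}]=Q_{s+t-s_n}f(B_{s_n})$ for $n$ large enough that $s_n<s+t$. Letting $n\to\infty$: on the right, $\|Q_{s+t-s_n}f-Q_tf\|_{C(E)}=\|Q_{s+t-s_n}(f-Q_{s_n-s}f)\|_{C(E)}\le\|f-Q_{s_n-s}f\|_{C(E)}\to0$, using that $Q_r$ is a contraction on $C(E)$ and the strong continuity of the semigroup at $0$ (Definition \ref{Fellersemigroupdef2}(2), or Lemma \ref{nastyc} together with the bound established inside Theorem \ref{euphoria}), so $Q_{s+t-s_n}f(B_{s_n})-Q_tf(B_{s_n})\to0$ uniformly in $\omega$; and, off the null set $\bigl(\bigcup_n\{B_{s_n}\ne\widetilde B_{s_n}\}\bigr)\cup\{B_s\ne\widetilde B_s\}$, we have $Q_tf(B_{s_n})=Q_tf(\widetilde B_{s_n})\to Q_tf(\widetilde B_s)=Q_tf(B_s)$ because $Q_tf\in C(E)$ by the Feller property and $t\mapsto\widetilde B_t(\omega)$ is right-continuous. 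Hence $Q_{s+t-s_n}f(B_{s_n})\to Q_tf(B_s)$ a.s., and these random variables are bounded by $\|f\|_{C(E)}$, so for every $G\in\mathcal F_s^+\ (\subset\mathcal F_{s_n})$ dominated convergence gives $\mathbb E[f(B_{s+t})\mathbbm{1}_G]=\lim_n\mathbb E[Q_{s+t-s_n}f(B_{s_n})\mathbbm{1}_G]=\mathbb E[Q_tf(B_s)\mathbbm{1}_G]$. Since $B_s$ is $\mathcal F_s\subset\mathcal F_s^+$-measurable, $Q_tf(B_s)$ is $\mathcal F_s^+$-measurable, so the defining property of conditional expectation yields $\mathbb E[f(B_{s+t})\mid\mathcal F_s^+]=Q_tf(B_s)$ a.s.

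\emph{Steps 2--3.} By Lemma \ref{coachella} every $G\in\widetilde{\mathcal F}_s$ differs from some $F\in\mathcal F_s^+$ by a null set, hence $\mathbb E[X\mathbbm{1}_G]=\mathbb E[X\mathbbm{1}_F]$ for any integrable $X$; applying this to $X=f(B_{s+t})$ and $X=Q_tf(B_s)$ together with Step 1 gives $\mathbb E[f(B_{s+t})\mid\widetilde{\mathcal F}_s]=Q_tf(B_s)$ a.s. (the right side is $\widetilde{\mathcal F}_s$-measurable since $\mathcal F_s^+\subset\widetilde{\mathcal F}_s$). Since $\widetilde B$ is a modification of $B$, $f(\widetilde B_{s+t})=f(B_{s+t})$ and $Q_tf(\widetilde B_s)=Q_tf(B_s)$ a.s., which is precisely \eqref{jets2}. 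Finally $(\widetilde B_t)$ is adapted to $(\widetilde{\mathcal F}_t)$, so $\sigma(\widetilde B_s)\subset\widetilde{\mathcal F}_s$ and $Q_tf(\widetilde B_s)$ is $\sigma(\widetilde B_s)$-measurable ($Q_tf$ being continuous, hence Borel); the tower property then turns \eqref{jets2} into $\mathbb E[f(\widetilde B_{s+t})\mid\widetilde B_s]=\mathbb E\bigl[\,\mathbb E[f(\widetilde B_{s+t})\mid\widetilde{\mathcal F}_s]\mid\widetilde B_s\,\bigr]=\mathbb E[Q_tf(\widetilde B_s)\mid\widetilde B_s]=Q_tf(\widetilde B_s)$, which is \eqref{jets}.

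I expect Step 1 — the Blumenthal-type passage from $\mathcal F_s$ to $\mathcal F_s^+$ — to be the only real obstacle: it is where one must simultaneously use strong continuity of the semigroup (to control $Q_{s+t-s_n}f\to Q_tf$), the right-continuity of the freshly constructed modification $\widetilde B$ (to get $Q_tf(B_{s_n})\to Q_tf(B_s)$ almost surely, which fails for $B$ on its own), and the Feller regularity $Q_tf\in C(E)$. Steps 2 and 3 are routine manipulations with null sets via Lemma \ref{coachella} and iterated conditioning.
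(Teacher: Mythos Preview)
Your argument is correct and follows essentially the same route as the paper: reduce $\widetilde{\mathcal F}_s$ to $\mathcal F_s^+$ via Lemma~\ref{coachella}, and then run a Blumenthal-type limit using the right-continuity of the modification $\widetilde B$ and dominated convergence. Two minor tactical differences are worth noting. First, in the limit step the paper keeps the lag $t$ fixed and applies the Markov property at time $r>s$ to obtain $\mathbb{E}\bigl[f(\widetilde B_{t+r})\mathbbm{1}_G\bigr]=\mathbb{E}\bigl[Q_tf(\widetilde B_r)\mathbbm{1}_G\bigr]$, then lets $r\downarrow s$; this avoids your separate uniform estimate $\lVert Q_{s+t-s_n}f-Q_tf\rVert\to0$ and uses only the Feller property $Q_tf\in C(E)$ together with the right-continuity of $\widetilde B$ on both sides. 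Second, the paper proves \eqref{jets} directly from the Markov property for $B$ and the modification, whereas you derive it from \eqref{jets2} by the tower property---your route here is in fact the cleaner one.
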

\begin{proof}[Proof of \ref{jets}]
    Let $G \in \sigma\mathopen{}\left(\widetilde{B}_s\right)\mathclose{}$, so $G=\widetilde{B}_s^{-1}(X)$, for some $X \in \mathcal{E}$. We will show for any such $X$, we have \begin{align*}
        \mathbb{E}\left[f\mathopen{}\left(\widetilde{B}_{t+s}\right)\mathclose{}\mathbbm{1}_{\widetilde{B}_s^{-1}(X)}\right]=\mathbb{E}\left[Q_{t}f\mathopen{}\left(\widetilde{B}_s\right)\mathclose{}\mathbbm{1}_{\widetilde{B}_s^{-1}(X)}\right].
    \end{align*} By definition \begin{align*}
        \mathbb{E}\left[f\mathopen{}\left(\widetilde{B}_{t+s}\right)\mathclose{}\mathbbm{1}_{\widetilde{B}_s^{-1}(X)}\right]=\int_{\Omega}f\mathopen{}\left(\widetilde{B}_{t+s}(\omega)\right)\mathclose{}\mathbbm{1}_{\widetilde{B}_s^{-1}(X)}(\omega)\mathbb{P}(\textup{d}\omega),
    \end{align*} but since $B_{t+s}=\widetilde{B}_{t+s}$ holds $\mathbb{P}$-almost everywhere, this means\begin{align*}
        \mathbb{E}\left[f\mathopen{}\left(\widetilde{B}_{t+s}\right)\mathclose{}\mathbbm{1}_{\widetilde{B}_s^{-1}(X)}\right]=\int_{\Omega}f\left(B_{t+s}(\omega)\right)\mathbbm{1}_{\widetilde{B}_s^{-1}(X)}(\omega)\mathbb{P}(\textup{d}\omega).
    \end{align*} Similarly, since $\widetilde{B}_s=B_s$ holds $\mathbb{P}$-almost everywhere, we have $\mathbbm{1}_{\widetilde{B}_s^{-1}(X)}(\omega)=\mathbbm{1}_{\widetilde{B}_s^{-1}(X)}(\omega)$ for $\mathbb{P}$-almost every $\omega$: Indeed, suppose $\omega$ is such that $\widetilde{B}_s(\omega)=B_s(\omega)$. Then $B_s(\omega) \in X$ if and only if $\widetilde{B}_s(\omega) \in X$, so $\mathbbm{1}_{\widetilde{B}_s^{-1}(X)}(\omega)=\mathbbm{1}_{\widetilde{B}_s^{-1}(X)}(\omega)$, as desired. Hence $\mathbbm{1}_{\widetilde{B}_s^{-1}(X)}(\omega)=\mathbbm{1}_{\widetilde{B}_s^{-1}(X)}(\omega)$ for $\mathbb{P}$-almost every $\omega$. Consequently, \begin{align*}
        \mathbb{E}\left[f\mathopen{}\left(\widetilde{B}_{t+s}\right)\mathclose{}\mathbbm{1}_{\widetilde{B}_s^{-1}(X)}\right]=\int_{\Omega}f\mathopen{}\left(B_{t+s}(\omega)\right)\mathclose{}\mathbbm{1}_{B_s^{-1}(X)}(\omega)\mathbb{P}(\textup{d}\omega),
    \end{align*} so \begin{align}
        \mathbb{E}\left[f\mathopen{}\left(\widetilde{B}_{t+s}\right)\mathclose{}\mathbbm{1}_{\widetilde{B}_s^{-1}(X)}\right]=\mathbb{E}\left[f\mathopen{}\left(B_{t+s}\right)\mathclose{}\mathbbm{1}_{B_s^{-1}(X)}\right].
        \label{lowlife1}
    \end{align} This holds for any $f\colon E \to \mathbb{R}$ and $t,s\ge 0$, so we may deduce \begin{align}
        \mathbb{E}\left[Q_{t}f\mathopen{}\left(\widetilde{B}_{s}\right)\mathclose{}\mathbbm{1}_{\widetilde{B}_s^{-1}(X)}\right]=\mathbb{E}\left[Q_{t}f\mathopen{}\left(B_s\right)\mathclose{}\mathbbm{1}_{B_s^{-1}(X)}\right].
        \label{lowlife2}
    \end{align} From the Markov property, we know \begin{align*}
        \mathbb{E}\left[f(B_{t+s})|B_s\right]=Q_{t}f(B_s),
    \end{align*} so this implies \begin{align*}
        \mathbb{E}\left[f(B_{t+s})\mathbbm{1}_{B_s^{-1}(X)}\right]=\mathbb{E}\left[Q_{t}f(B_s)\mathbbm{1}_{B_s^{-1}(X)}\right]
    \end{align*} for any $X \in \mathcal{E}$. Combining this with \eqref{lowlife1} and \eqref{lowlife2} then gives \begin{align*}
        \mathbb{E}\left[f\mathopen{}\left(\widetilde{B}_{t+s}\right)\mathclose{}\mathbbm{1}_{\widetilde{B}_s^{-1}(X)}\right]=\mathbb{E}\left[Q_{t}f\mathopen{}\left(\widetilde{B}_s\right)\mathclose{}\mathbbm{1}_{\widetilde{B}_s^{-1}(X)}\right]
    \end{align*} for any $X \in \mathcal{E}$, which implies \begin{align*}
        \mathbb{E}\left[f\mathopen{}\left(\widetilde{B}_{t+s}\right)\mathclose \Big\lvert \widetilde{B}_s\right]=Q_{t}f\mathopen{}\left(\widetilde{B}_s\right)\mathclose{},
    \end{align*}  as desired.\newline
 \end{proof}
Finally, we prove statement \ref{jets2}.
\begin{proof}[Proof of \ref{jets2}]
 Recall that $B_t=\widetilde{B}_t$ $\mathbb{P}$-almost everywhere for all $t\ge 0$, so from the Markov Property, we may deduce \begin{align}
     \mathbb{E}\left[f\mathopen{}\left(\widetilde{B}_{t+r}\right)\mathclose{}\Big\lvert\mathcal{F}_r\right]=Q_{t}f\mathopen{}\left(\widetilde{B}_r\right)\mathclose{} \quad \text{for all } r,t\ge 0.
     \label{dinero}
 \end{align} Recall $\widetilde{\mathcal{F}}_s=\sigma\mathopen{}\left(\mathcal{F}_s^+\cup \mathcal{N}\right)$, so we want to show \begin{align*}
     \mathbb{E}\left[f\mathopen{}\left(\widetilde{B}_{t+s}\right)\mathclose{}\Big\lvert\sigma\bigl(\mathcal{F}_s^+\cup \mathcal{N}\bigr)\right]=Q_{t}f\mathopen{}\left(\widetilde{B}_s\right)\mathclose{}
 \end{align*} for all $s,t\ge 0$ to prove statement \ref{jets2}. Recall $\widetilde{B}_s$ is $\sigma\mathopen{}\left(\mathcal{F}_s^+\cup \mathcal{N}\right)\mathclose{}$-measurable, so $Q_{t}f\mathopen{}\left(\widetilde{B}_s\right)\mathclose{}$ is $\sigma\mathopen{}\left(\mathcal{F}_s^+\cup \mathcal{N}\right)\mathclose{}$-measurable, so it remains to show that \begin{align*}
     \mathbb{E}\left[f\mathopen{}\left(\widetilde{B}_{t+s}\right)\mathclose{}\mathbbm{1}_G\right]=\mathbb{E}\left[Q_{t}f\mathopen{}\left(\widetilde{B}_s\right)\mathclose{}\mathbbm{1}_G\right]
 \end{align*} for all $G \in \sigma\mathopen{}\left(\mathcal{F}_s^+\cup \mathcal{N}\right)\mathclose{}$. From Lemma \ref{coachella}, we may deduce that for any $G \in \sigma\mathopen{}\left(\mathcal{F}_s^+\cup \mathcal{N}\right)\mathclose{}$, we can find $\widetilde{G} \in \mathcal{F}_s^+$ such that $\mathbbm{1}_G-\mathbbm{1}_{\widetilde{G}}$ is a null-function, so it suffices to show that  \begin{align*}
     \mathbb{E}\left[f\mathopen{}\left(\widetilde{B}_{t+s}\right)\mathclose{}\mathbbm{1}_G\right]=\mathbb{E}\left[Q_{t}f\mathopen{}\left(\widetilde{B}_s\right)\mathclose{}\mathbbm{1}_G\right]
 \end{align*} for all $G \in \mathcal{F}_s^+$. Let $G \in \mathcal{F}_s^+$, so $G \in \mathcal{F}_r$ for all $r>s$. In that case, \eqref{dinero} implies \begin{align}
     \mathbb{E}\left[f\mathopen{}\left(\widetilde{B}_{t+r}\right)\mathclose{}\mathbbm{1}_G\right]=\mathbb{E}\left[Q_{t}f\mathopen{}\left(\widetilde{B}_r\right)\mathclose{}\mathbbm{1}_G\right] \quad \forall r>s.
     \label{madonna}
 \end{align} Since $\left(\widetilde{B}_t\right)_{t\ge 0}$ is c\`{a}dl\`{a}g and $f$ is continuous, for each $\omega \in \Omega$, \begin{align*}
     \lim_{r \to s^+}Q_tf\mathopen{}\left(\widetilde{B}_r(\omega)\right)\mathclose{}\mathbbm{1}_G(\omega)&=Q_tf\mathopen{}\left(\widetilde{B}_s(\omega)\right)\mathbbm{1}_G(\omega), \\
     \lim_{r \to s^+}f\mathopen{}\left(\widetilde{B}_{t+r}(\omega)\right)\mathclose{}\mathbbm{1}_G(\omega)&=f\mathopen{}\left(\widetilde{B}_{t+s}(\omega)\right)\mathclose{}\mathbbm{1}_G(\omega).
 \end{align*} Furthermore, as each $Q_t$ is a contraction of $C(E)$,  \begin{align*}
     \left\lVert f\mathopen{}\left(\widetilde{B}_{t+r}\right)\mathclose{}\mathbbm{1}_G\right\rVert_{C(E)}, \left\lVert Q_{t}f\mathopen{}\left(\widetilde{B}_r\right)\mathclose{}\mathbbm{1}_G\right\rVert_{C(E)}\le \Vert f\rVert_{C(E)}.
 \end{align*}
 So by dominated convergence, we can let $r \to s^+$ in \eqref{madonna} to get \begin{align*}
     \mathbb{E}\left[f\mathopen{}\left(\widetilde{B}_{t+s}\right)\mathclose{}\mathbbm{1}_G\right]=\mathbb{E}\left[Q_{t}f\mathopen{}\left(\widetilde{B}_s\right)\mathclose{}\mathbbm{1}_G\right],
 \end{align*}
for all $G \in \mathcal{F}_s^+$. This \begin{align*}
     \mathbb{E}\left[f\mathopen{}\left(\widetilde{B}_{t+s}\right)\mathclose{} \Big\lvert \widetilde{\mathcal{F}}_s\right]=Q_{t}f\mathopen{}\left(\widetilde{B}_s\right)\mathclose{},
 \end{align*} for all $s,t\ge 0$. This completes the proof of statement \ref{jets2}.
\end{proof}

 { \small 
	\bibliographystyle{plain}
	\bibliography{bib.bib} }

\begin{thebibliography}{1}

\bibitem{LeGall}
Jean-Fran{\c{c}}ois Le~Gall.
\newblock {\em Mouvement brownien, martingales et calcul stochastique}, volume~71 of {\em Math\'ematiques \& Applications (Berlin) [Mathematics \& Applications]}.
\newblock Springer, Heidelberg, 2013.

\bibitem{seppalainen2012basics}
Timo Sepp{\"a}l{\"a}inen.
\newblock Basics of stochastic analysis.
\newblock {\em Lecture Notes. https://people. math. wisc. edu/\~{} seppalai/courses/735/notes2014. pdf}, 2012.

\end{thebibliography}

\end{document}